\def\5n{\negthinspace \negthinspace \negthinspace \negthinspace \negthinspace }
\def\4n{\negthinspace \negthinspace \negthinspace \negthinspace }
\def\3n{\negthinspace \negthinspace \negthinspace }
\def\2n{\negthinspace \negthinspace }
\def\1n{\negthinspace }
\def\ms{\medskip}
\def\no{\noindent}        \def\q{\quad}                      
    \def\qq{\qquad}
            \def\({\Big (}
                  \def\){\Big )}
          \def\[{\Big[}
           \def\]{\Big]}
\def\bde{\begin{definition}\label}    \def\ede{\end{definition}}
\def\be{\begin{equation}}
\def\bel{\begin{equation}\label}      \def\ee{\end{equation}}
\def\bt{\begin{theorem}\label}        \def\et{\end{theorem}}
\def\bc{\begin{corollary}\label}      \def\ec{\end{corollary}}
\def\bl{\begin{lemma}\label}          \def\el{\end{lemma}}
\def\bp{\begin{proposition}\label}    \def\ep{\end{proposition}}
\def\bas{\begin{assumption}\label}    \def\eas{\end{assumption}}
\def\br{\begin{remark}\label}         \def\er{\end{remark}}
\def\bex{\begin{example}\label}       \def\ex{\end{example}}
\def\ba{\begin{array}}                \def\ea{\end{array}}
\def\ben{\begin{enumerate}}           \def\een{\end{enumerate}}
\newtheorem{theorem}{Theorem}[section]
\newtheorem{definition}[theorem]{Definition}
\newtheorem{proposition}[theorem]{Proposition}
\newtheorem{corollary}[theorem]{Corollary}
\newtheorem{lemma}[theorem]{Lemma}
\newtheorem{remark}[theorem]{Remark}
\newtheorem{example}[theorem]{Example}
\begin{document}

\title{\bf  Backward doubly stochastic differential equations with random coefficients and quasilinear stochastic PDEs}

\author
{\textbf{Jiaqiang Wen}$^{1,2}$, \textbf{Yufeng Shi}$^{1,3,\dagger}$ \vspace{3mm}\\
\normalsize{$^{1}$Institute for Financial Studies and School of Mathematics,}\\
\normalsize{Shandong University, Jinan 250100, China}\\
\normalsize{$^{2}$Department of Mathematics, Southern University of Science and Technology,} \\
\normalsize{Shenzhen, Guangdong, 518055, China}\\
\normalsize{$^{3}$School of Statistics, Shandong University of Finance and Economics,}\\
\normalsize{ Jinan, Shandong, 250014, China}\\
}

\date{}
\renewcommand{\thefootnote}{\fnsymbol{footnote}}

\footnotetext[0]{This work is supported by National Natural Science Foundation of China (Grant Nos. 11871309, 11371226, 11671229, 11071145, 11526205, 11626247 and 11231005), the Foundation for Innovative Research Groups of National Natural Science Foundation of China (Grant No. 11221061) and the 111 Project (Grant No. B12023).}

\footnotetext[2]{Corresponding author. E-mail addresses: wenjq@sustc.edu.cn (J. Wen), yfshi@sdu.edu.cn (Y. Shi).}

\maketitle

%--------------------------------------------------------------------------------------------------------

\no\bf Abstract. \rm
In this paper, by virtue of Malliavin calculus, we establish a relationship between backward doubly stochastic differential equations with random coefficients and quasilinear stochastic PDEs, and thus extend the well-known nonlinear stochastic Feynman-Kac formula of Pardoux and Peng \cite{Peng} to non-Markovian case.

\ms

\no\bf Key words. \rm
Backward doubly stochastic differential equations;
Stochastic partial differential equations;
Nonlinear stochastic Feynman-Kac formula.

\ms

\no\bf AMS subject classifications. \rm 60H10; 60H20.
%49J15, 49N10, 49N35, 93B05

\section{Introduction}

It is well-known that Peng \cite{Peng91} and Pardoux and Peng \cite{Peng92} established the so-called nonlinear Feynman-Kac formula
by virtue of nonlinear backward stochastic differential equations (BSDEs in short) introduced by Pardoux and Peng \cite{Peng90},
that is they constructed a connection between quasilinear parabolic PDEs and Markovian forward-backward stochastic differential equations
(FBSDEs in short). Later, Pardoux and Peng \cite{Peng} introduced the so-called backward doubly stochastic differential equations (BDSDEs in short)
in order to give a probabilistic representation of solutions to a class of systems of quasilinear parabolic stochastic partial differential equations (SPDEs in short).
They established the well-known nonlinear stochastic Feynman-Kac formula by virtue of BDSDEs.

In order to illustrate the nonlinear stochastic Feynman-Kac formula, let us be more specific.
Let $\{W_{t};0\leq t\leq T\}$ and $\{B_{t};0\leq t\leq T\}$ be two mutually independent standard Brownian motions
with values in $\mathbb{R}^{d}$ and in $\mathbb{R}^{l}$, respectively.
For each $(t,x)\in [0,T]\times \mathbb{R}^{d}$, let $\{X^{t,x}_{s},t\leq s\leq T\}$ be
the solution of the following stochastic differential equation (SDE in short):
\begin{equation*}
  X^{t,x}_{s} = x + \int_t^s b(X^{t,x}_{r}) dr + \int_t^s \sigma(X^{t,x}_{r}) dW_{r}, \ \ t\leq s\leq T.
\end{equation*}
Let $\{ (Y^{t,x}_{s},Z^{t,x}_{s});t\leq s\leq T \}$, which is $\sigma(W_{r}-W_{t}; t\leq r\leq s)\vee \sigma(B_{r}-B_{s}; s\leq r\leq T)$-measurable,
be the solution of the well-posed BDSDE:
\begin{equation}\label{17}
 \begin{split}
  Y^{t,x}_{s} =& h(X^{t,x}_{T}) + \int_s^T f(r,X^{t,x}_{r},Y^{t,x}_{r},Z^{t,x}_{r}) dr \\
               &+ \int_s^T g(r,X^{t,x}_{r},Y^{t,x}_{r},Z^{t,x}_{r}) dB_{r} - \int_s^T Z^{t,x}_{r}dW_{r}, \ \ t\leq s\leq T,
 \end{split}
\end{equation}
where $f$ and $g$ are deterministic functions. The $dW$ integral is a forward It\^{o} integral and the $dB$ integral is a backward It\^{o} integral.
As we know, under enough smoothness assumptions on $b, \sigma, f$ and $g$,
there exists a random field $\{(u(t,x);0\leq t\leq T\}$, which is $\sigma(B_{r}-B_{t};t\leq r\leq T)$-measurable,
such that $Y^{t,x}_{t}=u(t,x)$ and $Z^{t,x}_{t}=\sigma(x) \nabla u(t,x)$.
Here, $\{ u(t,x);(t,x)\in [0,T]\times \mathbb{R}^{d} \}$ is a solution of the following system of backward SPDEs:
\begin{equation*}
 \begin{split}
  u(t,x) =& h(x) + \int_t^T \big[\mathcal{L}u(s,x) + f(s,x,u(s,x),(\nabla u \sigma)(s,x))\big] ds \\
          &+ \int_t^T g(s,x,u(s,x),(\nabla u \sigma)(s,x)) dB_{s}, \ \ 0\leq t\leq T,
 \end{split}
\end{equation*}
where $u$ takes values in $\mathbb{R}^{k}$ and $\mathcal{L}u=(Lu_{1},...,Lu_{k})^{\ast}$ with
\begin{equation*}
  L=\frac{1}{2}\sum_{i,j=1}^{d} (\sigma \sigma^{\ast})_{ij}(t,x)\frac{\partial^{2}}{\partial x_{i}\partial x_{j}}
    + \sum_{i=1}^{d} b_{i}(t,x) \frac{\partial}{\partial x_{i}}.
\end{equation*}
This relation permits us to solve the above type of BDSDEs by SPDEs.
Conversely we can also use BDSDEs to solve SPDEs.
This result is summarized as the nonlinear stochastic Feynman-Kac formula.
So far, both the theory and applications of BDSDEs have been paid intensive attention.
Boufoussi, Casteren and  Mrhardy \cite{Boufoussi} investigated the generalized BDSDEs and SPDEs with nonlinear Neumann boundary conditions.
Diehl and Friz \cite{Friz} established the connection between BSDEs with rough drivers and BDSDEs.
A stochastic maximum principle for backward doubly stochastic control systems was obtained in Han, Peng and Wu \cite{Han}.
Peng and Shi \cite{Shi3} studied a type of time-symmetric forward-backward doubly stochastic differential equations.
Shi, Gu and Liu \cite{Shi} gave a comparison theorem for BDSDEs.
For other recent developments on BDSDEs, we refer to the works of
Aman \cite{Aman12,Aman13}, Hu, Matoussi, and Zhang \cite{Huy},
Wen and Shi \cite{Wen}, Zhang and Shi \cite{Zhanglq}, etc.

During the past two decades many efforts have been made to extend the nonlinear (stochastic) Feynman-Kac formula to non-Markovian situation.
Ma, Yin and Zhang \cite{Zhang} extended the nonlinear Feynman-Kac formula to the random coefficient case under the following assumption
$$Y_{t}=u(t,X_{t}), \ \ \forall t\in [0,T], \ P-a.s.$$
See \cite{Zhang} for detailed discussion.
Peng and Wang \cite{Peng16} established a nonlinear Feynman-Kac formula in non-Markovian case
by using the functional It\^{o}/path-dependent calculus.
To the best of our knowledge, a few works have been done for nonlinear stochastic Feynman-Kac formula in non-Markovian situation,
especially when the coefficients are random. In fact, if the coefficients are random, it is very difficult to find a proper probabilistic representation for the solutions of PDEs (or SPDEs).

In this paper, we consider the BDSDE (\ref{17}) with random coefficients, that is,
the coefficients $f$ and $g$ are $\sigma(B_{r}-B_{t};t\leq r\leq T)$-measurable processes.
By virtue of Malliavin calculus, we establish a relationship between this type of BDSDEs and backward quasilinear SPDEs,
thus extending the nonlinear stochastic Feynman-Kac formula of Pardoux and Peng \cite{Peng}
(also the nonlinear Feynman-Kac formula of Pardoux-Peng \cite{Peng92}) to non-Markovian situation.

This paper is organized as follows.
In Section 2, some preliminary results are presented.
Section 3 is devoted to establish the regularity of the solutions of BDSDEs.
We relate the BDSDEs to a system of backward quasilinear SPDEs in Section 4.

\section{Preliminaries}

In this section, we present some preliminaries about BDSDEs and Malliavin derivative.
The Euclidean norm of a vector $x\in\mathbb{R}^{k}$ will be denoted by $|x|$, and for a $k\times d$ matrix $A$, we define $\| A \|=\sqrt{Tr AA^{\ast}}$.

\subsection{BDSDEs}

Throughout this paper, let $(\Omega,\mathcal{F},P)$ be a probability space and $T>0$ be a fixed terminal time.
Let $\{W_{t};0\leq t\leq T\}$ and $\{B_{t};0\leq t\leq T\}$ be two mutually independent standard Brownian motion processes defined on $(\Omega,\mathcal{F},P)$,
 with values in $\mathbb{R}^{d}$ and in $\mathbb{R}^{l}$, respectively.
Let $\mathcal{N}$ denote the class of $P$-null sets of $\mathcal{F}$.
For each $t\in [0,T]$, we define
 \begin{equation*}
 \mathcal{F}_{t} :=  \mathcal{F}_{t}^{W} \vee \mathcal{F}_{t,T}^{B},
 \end{equation*}
 where for any process $\{\eta_{t}\}, \ \mathcal{F}_{s,t}^{\eta}=\sigma\{\eta_{r}-\eta_{s};s\leq r\leq t\}\vee \mathcal{N}$ and
 $\mathcal{F}_{t}^{\eta}=\mathcal{F}_{0,t}^{\eta}$.
Note that the collection $\{ \mathcal{F}_{t}; t\in[0,T] \}$ is neither increasing nor decreasing, and it does not constitute a filtration.

For any $n \in \mathbb{N}$, let $M^{2}(0,T; \mathbb{R}^{n})$ denote the set of (classes of $dP\times dt$ a.e. equal)
$n$-dimensional jointly measurable stochastic processes  $\{ \varphi_{t}; t\in[0,T] \}$ which satisfy:\\
(i) \ \ $\|\varphi \|_{M^{2}}^{2} :=E\int_0^T |\varphi_{t}|^{2} dt<\infty$;\\
(ii) \ $\varphi_{t}$ is $\mathcal{F}_{t}$-measurable, for any $t\in[0,T].$\\
Similarly, we denote by $S^{2}(0,T; \mathbb{R}^{n})$ the set of $n$-dimensional continuous stochastic processes, which satisfy:\\
(i) \ \ $\|\varphi \|_{S^{2}}^{2} :=E(\sup\limits_{0\leq t\leq T} |\varphi_{t}|^{2})<\infty$;\\
(ii) \ $\varphi_{t}$ is $\mathcal{F}_{t}$-measurable, for any $t\in[0,T].$\\
Let
\begin{equation*}\label{}
 \begin{split}
   &f:\Omega\times [0,T]\times  \mathbb{R}^{k}\times \mathbb{R}^{k\times d}\longrightarrow  \mathbb{R}^{k},\\
   &g:\Omega\times [0,T]\times  \mathbb{R}^{k}\times \mathbb{R}^{k\times d}\longrightarrow  \mathbb{R}^{k\times l},
 \end{split}
\end{equation*}
be jointly measurable such that for any $(y,z)\in \mathbb{R}^{k}\times \mathbb{R}^{k\times d}$,
\begin{equation*}
  f(\cdot,y,z)\in M^{2}(0,T; \mathbb{R}^{k}), \ \ \ \ g(\cdot,y,z)\in M^{2}(0,T; \mathbb{R}^{k\times d}).
\end{equation*}
Moreover, we assume that there exist constants $c > 0$ and $0 <\alpha < 1$ such that for any
$(\omega,t)\in\Omega\times [0,T], \ (y_,z),(y',z')\in \mathbb{R}^{k}\times \mathbb{R}^{k\times d}$,
\begin{equation*} (H1) \ \ \ \ \
 \begin{cases}
   |f(t,y,z)-f(t,y',z')|^{2}\leq c(|y-y'|^{2} + \|z-z'\|^{2});\\
   \|g(t,y,z)-g(t,y',z')\|^{2}\leq c|y-y'|^{2} + \alpha \|z-z'\|^{2}.\\
 \end{cases}
\end{equation*}
There exists $C>0$ such that for all $(\omega,t,y,z)\in \Omega\times [0,T]\times \mathbb{R}^{k}\times \mathbb{R}^{k\times d}$,
\begin{equation*} (H2) \ \ \ \ \ \ \ \ \ \ \ \
   gg^{\ast}(t,y,z)\leq z z^{\ast} + C (\| g(t,0,0)\|^{2} + |y|^{2} )I. \ \ \ \ \ \ \ \ \ \
\end{equation*}
Given $\xi\in L^{2}(\Omega,\mathcal{F}_{T},P;\mathbb{R}^{k})$, we shall consider the following BDSDE:
\begin{equation}\label{4}
\begin{split}
   Y(t)=&\xi + \int_t^T f(s,Y(s),Z(s)) ds \\
        & + \int_t^T g(s,Y(s),Z(s)) dB_{s} - \int_t^T Z(s) dW_{s}, \ \ t\in[0,T].
\end{split}
\end{equation}
Note that the integral with respect to $\{ B_{t} \}$ is a ``backward It\^{o} integral''
and the integral with respect to $\{ W_{t} \}$ is a standard forward It\^{o} integral.
These two types of integrals are particular cases of the It\^{o}-Skorohod integral, see Nualart and Pardoux \cite{Nualart}.
\begin{proposition}[Pardoux and Peng \cite{Peng}, Theorem 1.1]\label{5}
Under the condition (H1), BDSDE (\ref{4}) has a unique solution
\begin{equation*}
  (Y,Z)\in S^{2}(0,T; \mathbb{R}^{k})\times M^{2}(0,T; \mathbb{R}^{k\times d}).
\end{equation*}
\end{proposition}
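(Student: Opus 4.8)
The plan is to prove both existence and uniqueness by a fixed-point argument on the Banach space $\mathcal{H}:=M^{2}(0,T;\mathbb{R}^{k})\times M^{2}(0,T;\mathbb{R}^{k\times d})$, adapting the contraction scheme for ordinary BSDEs to the doubly stochastic setting; the only genuinely new features relative to the classical case come from the backward $dB$ integral and from the coefficient $\alpha$ in (H1).

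First I would solve the equation with frozen coefficients: given $\eta\in M^{2}(0,T;\mathbb{R}^{k})$ and $\zeta\in M^{2}(0,T;\mathbb{R}^{k\times d})$, construct the unique pair $(Y,Z)$ satisfying $Y(t)=\xi+\int_{t}^{T}\eta(s)\,ds+\int_{t}^{T}\zeta(s)\,dB_{s}-\int_{t}^{T}Z(s)\,dW_{s}$. To produce $Z$ I would pass to the genuine increasing filtration $\mathcal{G}_{t}:=\mathcal{F}_{t}^{W}\vee\mathcal{F}_{T}^{B}$, under which $W$ is still a Brownian motion and the backward integral against $B$ is well defined. The random variable $\xi+\int_{0}^{T}\eta\,ds+\int_{0}^{T}\zeta\,dB_{s}$ is $\mathcal{G}_{T}$-measurable and square integrable, so the $\mathcal{G}$-martingale $M_{t}:=E[\,\xi+\int_{0}^{T}\eta\,ds+\int_{0}^{T}\zeta\,dB_{s}\mid\mathcal{G}_{t}]$ admits a representation $M_{t}=M_{0}+\int_{0}^{t}Z(s)\,dW_{s}$ for a unique $Z$, from which $Y$ is read off. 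The subtle point in this step is to check that $(Y(t),Z(t))$ is actually $\mathcal{F}_{t}=\mathcal{F}_{t}^{W}\vee\mathcal{F}_{t,T}^{B}$-measurable and not merely $\mathcal{G}_{t}$-measurable; this rests on the independence of the post-$t$ increments of $B$ from $\mathcal{F}_{t}^{W}$ together with the backward nature of the $dB$ integral.

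Next I would derive the basic energy identity. Applying It\^o's formula to $|Y(s)|^{2}$ on $[t,T]$ — tracking signs carefully, since the backward $dB$ integral contributes its quadratic variation with the opposite sign to the forward $dW$ term — and taking expectations kills the martingale parts and yields
\[
E|Y(t)|^{2}+E\int_{t}^{T}\|Z(s)\|^{2}\,ds=E|\xi|^{2}+2E\int_{t}^{T}\langle Y(s),\eta(s)\rangle\,ds+E\int_{t}^{T}\|\zeta(s)\|^{2}\,ds.
\]
The decisive structural feature is the asymmetry: the quadratic variation of the forward integral, $E\int\|Z\|^{2}$, sits on the left, while that of the backward integral, $E\int\|\zeta\|^{2}$, sits on the right.

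Finally I would set up the contraction. Define $\Phi(y,z)=(Y,Z)$, where $(Y,Z)$ solves the frozen equation with $\eta(s)=f(s,y(s),z(s))$ and $\zeta(s)=g(s,y(s),z(s))$; the Lipschitz property (H1) together with $f(\cdot,0,0),g(\cdot,0,0)\in M^{2}$ ensures $\Phi$ maps $\mathcal{H}$ into itself. Writing the equation for the difference of two images, applying the energy identity and the bounds of (H1) — in particular $\|g^{1}-g^{2}\|^{2}\le c|\hat{y}|^{2}+\alpha\|\hat{z}\|^{2}$, which contributes exactly $\alpha\,E\int\|\hat{z}\|^{2}$ on the right — and splitting the cross term $2\langle\hat{Y},f^{1}-f^{2}\rangle$ by Young's inequality, I would work in the weighted norm $\|(y,z)\|_{\beta}^{2}=E\int_{0}^{T}e^{\beta s}(|y|^{2}+\|z\|^{2})\,ds$ and run a Gronwall step in $\beta$. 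I expect the heart of the proof, and its main obstacle, to be forcing the resulting constant strictly below $1$: the coefficient $\alpha$ of $\|\hat{z}\|^{2}$ coming from $g$ cannot be reduced by tuning $\beta$, so the contraction closes only because (H1) assumes $\alpha<1$, which leaves a margin $1-\alpha$ to absorb the remaining $f$-contributions. Once $\Phi$ is shown to be a strict contraction, Banach's fixed point theorem provides the unique solution $(Y,Z)\in\mathcal{H}$, and a final Burkholder--Davis--Gundy estimate applied to the energy identity upgrades $Y$ to a continuous process in $S^{2}(0,T;\mathbb{R}^{k})$.
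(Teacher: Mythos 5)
Your architecture coincides with that of the proof this proposition is imported from (the paper itself gives no argument; it quotes Pardoux--Peng, Theorem 1.1, whose proof proceeds through exactly your three steps): solve the frozen-coefficient equation by martingale representation in the enlarged genuine filtration $\mathcal{G}_{t}=\mathcal{F}_{t}^{W}\vee\mathcal{F}_{T}^{B}$ and recover $\mathcal{F}_{t}$-measurability by a conditioning argument; establish the It\^{o} energy identity, whose signs you state correctly (the backward quadratic variation lands on the right, the forward one on the left); and close by a weighted-norm fixed point in which $\alpha<1$ is the decisive hypothesis, with Burkholder--Davis--Gundy upgrading $Y$ to $S^{2}$ at the end.

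The one step that fails as written is the contraction in the \emph{symmetric} norm $\|(y,z)\|_{\beta}^{2}=E\int_{0}^{T}e^{\beta s}(|y|^{2}+\|z\|^{2})\,ds$. You correctly note that the coefficient $\alpha$ of $\|\bar{z}\|^{2}$ cannot be reduced by tuning $\beta$, but it is not the only such constant: the $y$-Lipschitz constant $c$ of $g$ also enters the right-hand side undamped. Indeed, writing $\bar{Y},\bar{Z}$ (resp.\ $\bar{y},\bar{z}$) for the difference of two images (resp.\ inputs) of $\Phi$, the energy identity, (H1) and Young's inequality with parameter $\lambda$ yield
\begin{equation*}
(\beta-\lambda)\,E\int_{0}^{T}e^{\beta s}|\bar{Y}_{s}|^{2}\,ds+E\int_{0}^{T}e^{\beta s}\|\bar{Z}_{s}\|^{2}\,ds
\leq \Big(c+\frac{c}{\lambda}\Big)E\int_{0}^{T}e^{\beta s}|\bar{y}_{s}|^{2}\,ds+\Big(\alpha+\frac{c}{\lambda}\Big)E\int_{0}^{T}e^{\beta s}\|\bar{z}_{s}\|^{2}\,ds,
\end{equation*}
and since (H1) allows $c\geq 1$, no choice of $\beta,\lambda$ brings the $|\bar{y}|^{2}$-coefficient below $1$: the $e^{\beta s}$ weight is the same on both sides, and your margin $1-\alpha$ only controls the $z$-column, not the $y$-column. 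So $\Phi$ need not be a strict contraction for $\|\cdot\|_{\beta}$. The repair is cheap and is in effect how the cited proof closes: let the $\beta$-damping visible on the left act on the $y$-component of the \emph{norm}. Fix $\lambda=2c/(1-\alpha)$, so that $\alpha+c/\lambda=(1+\alpha)/2<1$, and use the equivalent norm $N_{\beta}(y,z)^{2}:=(\beta-\lambda)E\int_{0}^{T}e^{\beta s}|y|^{2}\,ds+E\int_{0}^{T}e^{\beta s}\|z\|^{2}\,ds$. The display then gives $N_{\beta}(\bar{Y},\bar{Z})^{2}\leq\max\big(\frac{c+c/\lambda}{\beta-\lambda},\frac{1+\alpha}{2}\big)\,N_{\beta}(\bar{y},\bar{z})^{2}$, so taking $\beta\geq\lambda+2(c+c/\lambda)/(1+\alpha)$ makes $\Phi$ a $\sqrt{(1+\alpha)/2}$-contraction on $M^{2}\times M^{2}$. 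With this single correction, the rest of your plan (the measurability check in the frozen step, the final $S^{2}$ estimate) goes through exactly as in Pardoux--Peng.
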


\begin{proposition}[Pardoux and Peng \cite{Peng}, Theorem 1.4]\label{8}
Assume (H1) and (H2) hold, and for some $p\geq 2$, $\xi\in L^{p}(\Omega,\mathcal{F}_{T},P;\mathbb{R}^{k})$ and
$ E\int_0^T (|f(t,0,0)|^{p}+ \|g(t,0,0)\|^{p}) dt <\infty.$
Then
\begin{equation*}
  E\bigg[\sup_{0\leq t\leq T}|Y_{t}|^{p} + \big( \int_0^T \|Z_{t}\|^{2} dt \big)^{p/2}\bigg]< \infty.
\end{equation*}
\end{proposition}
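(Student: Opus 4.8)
The plan is to establish the bound by a standard It\^o--Burkholder--Davis--Gundy (BDG)--Gronwall scheme, in which the doubly stochastic structure forces (H1) and (H2) to be used in complementary ways. Since Proposition \ref{5} already yields $(Y,Z)\in S^2\times M^2$, the genuinely new content is the case $p>2$. First I would apply the It\^o formula for backward doubly stochastic processes (see \cite{Peng}) to $|Y_t|^p$ (which is $C^2$ for $p\ge2$; one may regularize by $(\varepsilon+|Y_t|^2)^{p/2}$ and let $\varepsilon\downarrow0$). The structural point is that the forward $dW$-integral produces a Hessian term $-\frac12\int_t^T\mathrm{tr}[D^2\phi(Y_s)\,Z_sZ_s^*]\,ds$ while the backward $dB$-integral produces the opposite-signed term $+\frac12\int_t^T\mathrm{tr}[D^2\phi(Y_s)\,g_sg_s^*]\,ds$, where $\phi(y)=|y|^p$, $D^2\phi(y)=p|y|^{p-2}I+p(p-2)|y|^{p-4}yy^*$, and I write $f_s=f(s,Y_s,Z_s)$, $g_s=g(s,Y_s,Z_s)$. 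Moving the $Z$-term to the left gives
\begin{equation*}
|Y_t|^p+\frac12\int_t^T\mathrm{tr}\big[D^2\phi(Y_s)\,Z_sZ_s^*\big]\,ds
=|\xi|^p+p\int_t^T|Y_s|^{p-2}\langle Y_s,f_s\rangle\,ds+M_t+\frac12\int_t^T\mathrm{tr}\big[D^2\phi(Y_s)\,g_sg_s^*\big]\,ds,
\end{equation*}
where $M_t$ collects the two stochastic integrals.

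The heart of the matter is to prevent the backward Hessian term from absorbing the whole of the forward one. Splitting $\mathrm{tr}[D^2\phi\,g_sg_s^*]=p|Y_s|^{p-2}\|g_s\|^2+p(p-2)|Y_s|^{p-4}\langle Y_s,g_sg_s^*Y_s\rangle$, I would bound the trace part using (H1): taking $y'=z'=0$ and using Young gives $\|g(s,y,z)\|^2\le\alpha'\|z\|^2+C(|y|^2+\|g(s,0,0)\|^2)$ for some $\alpha'\in(\alpha,1)$, so this part is at most $\alpha'$ times the corresponding radial $Z$-term $p|Y_s|^{p-2}\|Z_s\|^2$ plus lower order. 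The directional part I would bound by the matrix inequality (H2), $g_sg_s^*\le Z_sZ_s^*+C(\|g(s,0,0)\|^2+|Y_s|^2)I$, which makes $p(p-2)|Y_s|^{p-4}\langle Y_s,g_sg_s^*Y_s\rangle$ at most the directional $Z$-term plus lower order. Subtracting, the directional $Z$-energies cancel exactly while a strictly positive multiple $\frac{p(1-\alpha')}{2}\int_t^T|Y_s|^{p-2}\|Z_s\|^2\,ds$ of the radial energy survives on the left. The generator term is handled by (H1) through $|f_s|\le|f(s,0,0)|+\sqrt{c}\,(|Y_s|+\|Z_s\|)$; the resulting $|Y_s|^{p-1}\|Z_s\|$ contribution is split by Young, a small fraction of $|Y_s|^{p-2}\|Z_s\|^2$ being absorbed into the surviving left-hand energy and the remainder estimated by $|Y_s|^p+|f(s,0,0)|^p$.

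With these bounds in place I would take the supremum over $t$ and then expectations. The integrals in $M_t$ are martingales (of zero expectation) only after localization, so I would stop at $\tau_n=\inf\{t:\ |Y_t|+\int_0^t\|Z_s\|^2\,ds\ge n\}$, argue on $[0,\tau_n]$, and pass to the limit by monotone convergence and Fatou. Each martingale supremum is controlled by BDG, e.g. $E\big[\sup_t|\int_t^T|Y_s|^{p-2}\langle Y_s,Z_s\,dW_s\rangle|\big]\le CE\big[(\int_0^T|Y_s|^{2p-2}\|Z_s\|^2\,ds)^{1/2}\big]$, which, after $\int_0^T|Y_s|^{2p-2}\|Z_s\|^2\,ds\le\sup_s|Y_s|^p\int_0^T|Y_s|^{p-2}\|Z_s\|^2\,ds$ and Young, splits into a small multiple of $E[\sup_t|Y_t|^p]$ plus a multiple of $E\int_0^T|Y_s|^{p-2}\|Z_s\|^2\,ds$. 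Absorbing these and applying Gronwall's lemma backward in time yields $E[\sup_{0\le t\le T}|Y_t|^p]\le C\big(E|\xi|^p+E\int_0^T(|f(s,0,0)|^p+\|g(s,0,0)\|^p)\,ds\big)$.

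Finally, for the $Z$-moment I would return to the It\^o formula with $p=2$, use (H1) to retain $(1-\alpha')\int_0^T\|Z_s\|^2\,ds$ on the left, solve for this integral, raise to the power $p/2$, and take expectations; applying BDG to $|\int_0^T\langle Y_s,Z_s\,dW_s\rangle|^{p/2}$ and to the $dB$-integral, then Young, produces a small multiple of $E[(\int_0^T\|Z_s\|^2\,ds)^{p/2}]$ that is absorbed on the left, the rest being controlled by the already-bounded $E[\sup_t|Y_t|^p]$ and the data. I expect the main obstacle to be exactly the competition between the two Hessian terms: one must exploit $\alpha<1$ from (H1) for the radial component and the full matrix form of (H2) for the directional component \emph{simultaneously}, since either condition alone renders the coefficient of $\int|Y|^{p-2}\|Z\|^2$ nonpositive for large $p$. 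The localization justifying the martingale cancellations is a second, more routine, technical point.
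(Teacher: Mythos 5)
Your proof is correct and is essentially the argument of Pardoux--Peng \cite{Peng} (Theorem 1.4), which the present paper quotes without reproducing: It\^{o}'s formula applied to $|Y_t|^p$, with the directional Hessian contribution $p(p-2)|Y_s|^{p-4}\langle Y_s, g_sg_s^*Y_s\rangle$ dominated via the matrix inequality (H2) so that it cancels against its $Z$-counterpart, the radial contribution $p|Y_s|^{p-2}\|g_s\|^2$ dominated via (H1) with $\alpha<1$ so that a positive multiple of $\int |Y_s|^{p-2}\|Z_s\|^2\,ds$ survives, followed by localization, BDG and Gronwall, and the $p=2$ identity raised to the power $p/2$ for the $Z$-moment. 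Your closing observation about needing (H1) and (H2) simultaneously is exactly the rationale for introducing (H2) in the original paper.
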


\subsection{Malliavin derivative}

Let us recall the notion of derivation on Wiener space.
The following definition comes from Pardoux and Peng \cite{Peng},
and for more information about Malliavin calculus we refer the readers to  Nualart \cite{Nualart06}.

For any random variable $F$ of the form:
\begin{equation*}
  F=f\big(W(h_{1}),...,W(h_{n});B(k_{1}),...,W(k_{p})\big),
\end{equation*}
where $f\in C^{\infty}_{b}(\mathbb{R}^{n+p}),h_{i}\in L^{2}([0,T];\mathbb{R}^{d}),k_{j}\in L^{2}([0,T];\mathbb{R}^{l})$,
and
\begin{equation*}
  W(h_{i})= \int_0^T (h_{i}(t),dW_{t}), \ \ \    B(k_{j})= \int_0^T (k_{j}(t),dB_{t}), \ \ \ i=1,...,n;j=1,...,p.
\end{equation*}
To such a random variable $F$, we associate a ``derivated process'' $\{ D_{t}F; t\in[0,T] \}$ defined as
\begin{equation}\label{0}
 D_{t}F:=\sum_{i=1}^{n}f'_{i}\big(W(h_{1}),...,W(h_{n});B(k_{1}),...,B(k_{p})\big)h_{i}, \ \ \ t\in[0,T].
\end{equation}
For such an $F$, we define its $1,2$-norm as:
\begin{equation*}
  \| F \|_{1,2}^{2}=E(F^{2}) + E\int_0^T |D_{t}F|^{2} dt.
\end{equation*}
We denote by $\mathbb{S}$ the set of random variables of the above form, and define $\mathbb{D}^{1,2}$ the Sobolev space:
\begin{equation*}
  \mathbb{D}^{1,2}:=\overline{\mathbb{S}}^{\| \cdot \|_{1,2}}.
\end{equation*}
The ``derivation operator'' $D_{\mathbf{.}}$ extends as an operator from $\mathbb{D}^{1,2}$ into $L^{2}(\Omega;L^{2}$ $(\Omega\times [0,T];\mathbb{R}^{d}))$.
\begin{example}
Suppose $F\in\mathbb{D}^{1,2}$ is of the form $F=f\big(B(k_{1}),...,B(k_{p})\big)$. Then from the definition, we have $D_{t}F=0, \ t\in[0,T].$
\end{example}

\section{Regularity of the solution of BDSDEs}

Let us first recall some notations from Pardoux and Peng \cite{Peng92}.
$C^{k}(\mathbb{R}^{p}; \mathbb{R}^{q}),$ $C_{b}^{k}(\mathbb{R}^{p}$; $\mathbb{R}^{q}),C_{p}^{k}(\mathbb{R}^{p};\mathbb{R}^{q})$ will denote,
respectively, the set of functions of class $C^{k}$ from $\mathbb{R}^{p}$ into $\mathbb{R}^{q}$,
the set of those functions of class $C_{b}^{k}$ whose partial derivatives of order less then or equal to $k$ are bounded,
(and hence the function itself grows at most linearly at infinity),
and the set of those functions of class $C_{p}^{k}$ which,
together with all their partial derivatives of order less then or equal to $k$, grow at most like a polynomial function of the variable $x$ at infinity.

We are given $b\in C^{3}_{b}(\mathbb{R}^{d};\mathbb{R}^{d})$ and $\sigma\in C^{3}_{b}(\mathbb{R}^{d};\mathbb{R}^{d\times d})$,
and for each $t\in[0,T), \ x\in \mathbb{R}^{d}$, we denote by $\{X^{t,x}_{s},t\leq s\leq T\}$ the unique strong solution of the following SDE:
\begin{equation}\label{1}
  X^{t,x}_{s} = x + \int_t^s b(X^{t,x}_{r}) dr + \int_t^s \sigma(X^{t,x}_{r}) dW_{r}, \q t\leq s\leq T.
\end{equation}
It is well known that the random field $\{ X^{t,x}_{s};0\leq t\leq s\leq T, x\in\mathbb{R}^{d} \}$ has a version which is a.s. of class $C^{2}$ in $x$,
the function and its derivatives being a.s. continuous with respect to $(t,s,x)$.

Moreover, for each $(t, x)$,
\begin{equation*}
  \sup_{t\leq s\leq T} \bigg(|X^{t,x}_{s}| + |\nabla X^{t,x}_{s}| + |D^{2}X^{t,x}_{s}|\bigg) \in \bigcap_{p\geq 1}L^{p}(\Omega),
\end{equation*}
where $\nabla X^{t,x}_{s}$ denotes the matrix of first order derivatives of $X^{t,x}_{s}$ with respect to $x$
and $D^{2}X^{t,x}_{s}$ the tensor of second order derivatives.

Now the coefficients $f$ and $g$ of BDSDE
\begin{equation*}\label{}
 \begin{split}
   f:\Omega\times [0,T]\times \mathbb{R}^{d}\times \mathbb{R}^{k}\times \mathbb{R}^{k\times d}&\longrightarrow  \mathbb{R}^{k},\\
   g:\Omega\times [0,T]\times \mathbb{R}^{d}\times \mathbb{R}^{k}\times \mathbb{R}^{k\times d}&\longrightarrow  \mathbb{R}^{k\times l},
 \end{split}
\end{equation*}
will be the form
\begin{equation}\label{12}
 \begin{split}
   f(\omega,t,x,y,z)&= \overline{f}(\overleftarrow{B}(\omega,t),x,y,z),\\
   g(\omega,t,x,y,z)&= \overline{g}(\overleftarrow{B}(\omega,t),x,y,z),\\
 \end{split}
\end{equation}
where $\overline{f}$ and $\overline{g}$ are functions from
\begin{equation*}\label{}
   \overline{f}:\mathbb{R}^{l}\times \mathbb{R}^{d}\times \mathbb{R}^{k}\times \mathbb{R}^{k\times d}\longrightarrow  \mathbb{R}^{k},\ \ \
   \overline{g}:\mathbb{R}^{l}\times \mathbb{R}^{d}\times \mathbb{R}^{k}\times \mathbb{R}^{k\times d}\longrightarrow  \mathbb{R}^{k\times l},
\end{equation*}
and
\begin{equation*}
  \overleftarrow{B}(\omega,t):=\int_t^T \phi(s)dB(\omega,s),
\end{equation*}
with $\phi\in L^{2}([0,T];\mathbb{R}^{l})$. Here the integral with respect to $\{ B(\omega,s) \}$ is a ``backward It\^{o} integral''.
It is easy to see that $\overleftarrow{B}_{t}$ is $\mathcal{F}^{B}_{t,T}$-measurable, for any $t\in[0,T].$
The coefficients $f$ and $g$ can be seen as the compound functions of  $\overline{f}$ and $\overleftarrow{B}$,
 and $\overline{g}$ and $\overleftarrow{B}$, respectively.

We assume that for any $e\in \mathbb{R}^{l}$, $(x,y,z)\rightarrow ((\overline{f}(e,x,y,z),(\overline{g}(e,x,y,z))$ is of class $C^{3}$,
and all derivatives are bounded on $\mathbb{R}^{l}\times \mathbb{R}^{d}\times \mathbb{R}^{k}\times \mathbb{R}^{k\times d}.$
Thus for any $(\omega,t)\in \Omega\times [0,T]$,  $(x,y,z)\rightarrow (f(t,x,y,z),(g(t,x,y,z))$ is also of class $C^{3}$,
and all derivatives are bounded too.

We assume again that (H1) and (H2) hold, together with
\begin{equation*} (H3): \ \ \ \
    \overline{g}'_{z}(e,x,y,z) \theta \theta^{\ast}  \overline{g}'_{z}(e,x,y,z)^{\ast} \leq  \theta \theta^{\ast},  \q
    \forall  (e,x,y,z)\in \mathbb{R}^{l}\times \mathbb{R}^{d}\times \mathbb{R}^{k}\times \mathbb{R}^{k\times d}.
\end{equation*}
Let $h\in C_{p}^{3}(\mathbb{R}^{d};\mathbb{R}^{k})$. For any $(\omega,t)\in\Omega\times [0,T], \ x\in \mathbb{R}^{d}$,
denote by $\{ (Y^{t,x}_{s},Z^{t,x}_{s});t\leq s\leq T, x\in\mathbb{R}^{d} \}$ the unique solution of BDSDE:
\begin{equation}\label{2}
 \begin{split}
  Y^{t,x}_{s} =& h(X^{t,x}_{T}) + \int_s^T f(\omega,r,X^{t,x}_{r},Y^{t,x}_{r},Z^{t,x}_{r}) dr \\
               &+ \int_s^T g(\omega,r,X^{t,x}_{r},Y^{t,x}_{r},Z^{t,x}_{r}) dB_{r} - \int_s^T Z^{t,x}_{r}dW_{r}, \ \ t\leq s\leq T.
 \end{split}
\end{equation}
From the relation (\ref{12}), BDSDE (\ref{2}) can be rewritten as
\begin{equation*}
 \begin{split}
  Y^{t,x}_{s} =& h(X^{t,x}_{T}) + \int_s^T \overline{f}(\overleftarrow{B}(\omega,t),X^{t,x}_{r},Y^{t,x}_{r},Z^{t,x}_{r}) dr \\
               &+ \int_s^T \overline{g}(\overleftarrow{B}(\omega,t),X^{t,x}_{r},Y^{t,x}_{r},Z^{t,x}_{r}) dB_{r}-\int_s^T Z^{t,x}_{r}dW_{r}, \ \ t\leq s\leq T.
 \end{split}
\end{equation*}
We shall define $X^{t,x}_{s}, \ Y^{t,x}_{s}$ and $Z^{t,x}_{s}$ for all $(s,t)\in [0,T]^{2}$ by letting
$X^{t,x}_{s}=X^{t,x}_{s\vee t}, \ Y^{t,x}_{s}=Y^{t,x}_{s\vee t}$ and $Z^{t,x}_{s}=0$ for $s<t$.

\begin{theorem}\label{6}
$\{Y^{t,x}_{s}; (t,s)\in [0,T]^{2}, x\in\mathbb{R}^{d} \}$ has a version whose trajectories belong to $C^{0,0,2}([0,T]^{2}\times \mathbb{R}^{d})$.
\end{theorem}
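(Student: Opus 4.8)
The plan is to follow the variational (differentiation-in-$x$) approach of Pardoux and Peng \cite{Peng92}, adapted to the doubly stochastic setting, and to conclude via a Kolmogorov continuity argument. Continuity in the running variable $s$ is automatic: for each fixed $(t,x)$ the process $s\mapsto Y^{t,x}_{s}$ lies in $S^{2}(0,T;\mathbb{R}^{k})$ and hence has continuous trajectories. The substance of the statement is therefore the $C^{2}$-regularity in the spatial variable $x$, together with joint continuity in $(t,s,x)$, and the core of the argument is to show that the spatial difference quotients of $(Y^{t,x},Z^{t,x})$ converge, in a suitable $L^{p}$ sense, to the solutions of linear ``variational'' BDSDEs.

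First I would treat first-order differentiability in $x$. Differentiating the BDSDE \eqref{2} formally with respect to $x_{i}$ yields a \emph{linear} BDSDE for the pair $(\nabla_{i}Y^{t,x},\nabla_{i}Z^{t,x})$: its terminal value is $h'(X^{t,x}_{T})\nabla_{i}X^{t,x}_{T}$, and its coefficients are built from the bounded partial derivatives $\overline{f}'_{x},\overline{f}'_{y},\overline{f}'_{z},\overline{g}'_{x},\overline{g}'_{y},\overline{g}'_{z}$ (which exist by the $C^{3}$ hypothesis on $\overline{f},\overline{g}$) evaluated along $(\overleftarrow{B}_{r},X^{t,x}_{r},Y^{t,x}_{r},Z^{t,x}_{r})$, plus an inhomogeneous term coming from $\nabla_{i}X^{t,x}$. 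The crucial point is that this linear equation is itself well posed: its $z$-driver is Lipschitz by (H1), while the coefficient of $\mathrm{d}B$ has $z$-derivative $\overline{g}'_{z}$, whose contraction bound is exactly guaranteed by (H3) (the linearized analogue of $\alpha<1$). Proposition \ref{5} then gives a unique solution in $S^{2}\times M^{2}$. Using the $L^{p}$ estimates of Proposition \ref{8} and the standard stability estimate for BDSDEs, I would show that $(Y^{t,x+he_{i}}-Y^{t,x})/h$ and $(Z^{t,x+he_{i}}-Z^{t,x})/h$ converge as $h\to 0$ to this solution, thereby identifying $(\nabla_{i}Y^{t,x},\nabla_{i}Z^{t,x})$ and providing moment bounds locally uniform in $(t,x)$.

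Second, I would iterate: differentiating the first-order variational BDSDE once more in $x_{j}$ produces a linear BDSDE for $(\nabla^{2}_{ij}Y^{t,x},\nabla^{2}_{ij}Z^{t,x})$ with the same homogeneous structure (hence well posed by the same (H1)+(H3) argument), but now with an inhomogeneous forcing that is quadratic in the already-controlled first derivatives and in $\nabla X,D^{2}X$. Since all partial derivatives of $\overline{f},\overline{g}$ up to order three are bounded, $h\in C^{3}_{p}$, and the $X$-derivatives have moments of all orders, Proposition \ref{8} again delivers $L^{p}$ bounds for $\nabla^{2}Y,\nabla^{2}Z$, uniform on compacts, and the second difference quotients converge to them.

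Finally, to pass from these $L^{p}$-differentiability statements to a jointly continuous version I would apply the Kolmogorov continuity criterion. The key estimates are moment bounds of the form $E|\nabla^{2}Y^{t,x}_{s}-\nabla^{2}Y^{t',x'}_{s'}|^{p}\le C_{p}\big(|t-t'|+|s-s'|+|x-x'|\big)^{\gamma p}$ for a fixed $\gamma>0$ and all large $p$, obtained once more from the stability estimates for the variational BDSDEs together with the known modulus of continuity of $X^{t,x}_{s}$ and its derivatives in $(t,s,x)$. Taking $p$ large enough then yields a version of $\{Y^{t,x}_{s}\}$ whose trajectories are $C^{2}$ in $x$ and jointly continuous in $(t,s,x)$, i.e. lie in $C^{0,0,2}([0,T]^{2}\times\mathbb{R}^{d})$. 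I expect the main obstacle to be the regularity in the \emph{initial time} $t$: unlike the Markovian case, $t$ enters not only through the starting point of $X^{t,x}$ but also through the random coefficients via $\overleftarrow{B}(\omega,t)=\int_{t}^{T}\phi(s)\,\mathrm{d}B(s)$, so the continuity-in-$t$ estimates must absorb the $L^{p}$-modulus of $t\mapsto\overleftarrow{B}_{t}$ and its interaction with the backward $\mathrm{d}B$-integral; keeping the variational BDSDEs well posed in this doubly stochastic, random-coefficient setting, where (H3) supplies the contraction, is the technical heart of the argument.
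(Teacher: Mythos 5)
Your strategy coincides with the paper's, which itself transplants the proofs of Theorem 2.1 of Pardoux--Peng \cite{Peng} and Theorem 2.9 of \cite{Peng92}: $L^{p}$ a priori bounds from Proposition \ref{8}, $L^{p}$ increment estimates in $(t,x)$ and in the difference-quotient parameter, and Kolmogorov's lemma. The one mechanical difference is the order of operations. The paper never solves a variational BDSDE inside this proof: it estimates $E\big[\sup_{s}|\Delta^{i}_{h}Y^{t,x}_{s}-\Delta^{i}_{h'}Y^{t',x'}_{s}|^{p}\big]$ directly, with modulus $c_{p}(1+|x|^{q}+|x'|^{q}+|h|^{q}+|h'|^{q})(|x-x'|^{p}+|h-h'|^{p}+|t-t'|^{p/2})$, and applies Kolmogorov to the field indexed by $(t,x,h)$, so that continuity at $h=0$ yields existence \emph{and} joint continuity of $\partial Y/\partial x$ in one stroke. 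You instead first identify the derivative as the solution of the linearized BDSDE and prove $L^{p}$-convergence of the difference quotients at each fixed point; that route also works, but it needs the (standard, and in your sketch unstated) extra step converting pointwise $L^{p}$-differentiability into a.s.\ differentiability of the continuous version, e.g.\ via the identity $Y^{t,x+he_{i}}_{s}-Y^{t,x}_{s}=h\int_{0}^{1}\nabla_{i}Y^{t,x+\lambda he_{i}}_{s}\,d\lambda$, established for fixed arguments and extended by joint continuity.

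Two specific points in your justification are wrong, though both are repairable. First, the strict contraction for the linearized $dB$-coefficient does \emph{not} come from (H3): (H3) only gives the non-strict bound $(\overline{g}'_{z})\theta\theta^{\ast}(\overline{g}'_{z})^{\ast}\leq\theta\theta^{\ast}$, i.e.\ operator norm at most $1$, which is precisely the borderline case where the fixed-point argument behind Proposition \ref{5} fails. What makes all the variational equations well posed is (H1): Lipschitz continuity of $g$ in $z$ with constant $\sqrt{\alpha}$ passes to the derivative, so $\|\overline{g}'_{z}\|\leq\sqrt{\alpha}<1$ (the paper records this as $\|\overline{\chi}_{r}\|\leq\alpha<1$); (H3) is reserved for the SPDE connection, not for this theorem. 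Second, your anticipated ``main obstacle'' --- that the initial time $t$ enters the coefficients through $\overleftarrow{B}(\omega,t)$ and that its $L^{p}$-modulus must be absorbed --- is spurious. By \eqref{12} the coefficients of \eqref{2} are evaluated along the \emph{running} time: $f(\omega,r,X^{t,x}_{r},\ldots)=\overline{f}(\overleftarrow{B}_{r},X^{t,x}_{r},\ldots)$ (the display after \eqref{2} showing $\overleftarrow{B}(\omega,t)$ is a typo; compare $\Sigma^{t,x;t',x'}_{r,\lambda}$ in the paper's proof, which uses $\overleftarrow{B}_{r}$). Hence the random coefficient field is the same for every $(t,x)$, and the entire $t$-dependence is absorbed through the flow estimate $E\big[\sup_{s}|X^{t,x}_{s}-X^{t',x'}_{s}|^{p}\big]\leq c_{p}(1+|x|^{q}+|x'|^{q})(|x-x'|^{p}+|t-t'|^{p/2})$, exactly as in the Markovian case; no interaction between $t$ and the backward $dB$-integral needs to be controlled.
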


Before proceeding to the proof of this theorem, let us state an immediate corollary:

\begin{corollary}\label{18}
There exists a continuous version of the random field $\{ Y^{t,x}_{t};$ $ t\in [0,T], x\in \mathbb{R}^{d} \}$ such that for any $t\in[0,T]$,
$x\rightarrow Y^{t,x}_{t}$ is of class $C^{2}$ a.s., the derivatives being a.s. continuous in $(t,x)$.
\end{corollary}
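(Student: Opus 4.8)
The plan is to adapt the differentiation-plus-Kolmogorov scheme of Pardoux and Peng \cite{Peng92} to the present doubly stochastic, random-coefficient setting. The crucial simplification is that the $\omega$-dependence of $f$ and $g$ enters only through the parameter $e=\overleftarrow{B}(\omega,t)$ fed into the smooth functions $\overline{f},\overline{g}$; since $\overleftarrow{B}$ does not depend on $x$, it behaves as an inert bounded-derivative argument and produces no extra terms upon differentiating in $x$. First I would record the basic moment estimates: from Proposition \ref{8} (using (H1) and (H2)) together with the stated $L^p$-bounds on $X^{t,x}_s,\nabla X^{t,x}_s,D^2X^{t,x}_s$ and the smoothness of $\overline{f},\overline{g},h$, one obtains $E[\sup_s|Y^{t,x}_s|^p]<\infty$ and the companion bounds for $Z$, which supply the integrability used throughout.

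Next I would establish first-order differentiability in $x$. Formal differentiation of (\ref{2}) suggests that $\nabla Y^{t,x}_s$ should solve the linear BDSDE
\begin{equation*}
\begin{split}
\nabla Y^{t,x}_s =& h'(X^{t,x}_T)\nabla X^{t,x}_T + \int_s^T\big[\overline{f}'_x\nabla X^{t,x}_r+\overline{f}'_y\nabla Y^{t,x}_r+\overline{f}'_z\nabla Z^{t,x}_r\big]\,dr\\
&+\int_s^T\big[\overline{g}'_x\nabla X^{t,x}_r+\overline{g}'_y\nabla Y^{t,x}_r+\overline{g}'_z\nabla Z^{t,x}_r\big]\,dB_r-\int_s^T\nabla Z^{t,x}_r\,dW_r,
\end{split}
\end{equation*}
the derivatives being evaluated at $(\overleftarrow{B}(\omega,t),X^{t,x}_r,Y^{t,x}_r,Z^{t,x}_r)$. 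Since all these derivatives are bounded and the strict contraction $\|\overline{g}'_z\|<1$ is inherited from the second line of (H1), this equation is again of type (H1), so Proposition \ref{5} gives a unique solution $(\nabla Y^{t,x},\nabla Z^{t,x})\in S^2\times M^2$, with all moments finite by Proposition \ref{8}. To identify it with the genuine derivative, I would form the normalized increment $h^{-1}(Y^{t,x+he_i}_s-Y^{t,x}_s)$, observe that it solves a BDSDE whose coefficients are averaged (mean-value) derivatives along the two solutions, and use the a priori stability estimate for BDSDEs to show it converges in $S^2$ to $\nabla_i Y^{t,x}_s$ as $h\to0$, the convergence of the driving coefficients following from the continuity of the derivatives of $\overline{f},\overline{g}$ and dominated convergence.

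I would then iterate: the second derivative $\nabla^2 Y^{t,x}_s$ solves a further linear BDSDE of the same type, carrying an additional inhomogeneous term that collects the quadratic expressions in $\nabla X,\nabla Y,\nabla Z$ produced by the second application of the chain rule, and it is again well-posed with all moments finite. Finally, with uniform $L^p$-estimates for the increments of $Y$, $\nabla Y$ and $\nabla^2 Y$ in the parameters $(t,s,x)$, the Kolmogorov continuity criterion yields a version that is jointly continuous in $(t,s)$ and $C^2$ in $x$, i.e. lies in $C^{0,0,2}([0,T]^2\times\mathbb{R}^d)$; continuity in the initial time $t$ uses both the $t$-continuity of $X^{t,x}_s$ (already known) and that of $\overleftarrow{B}(\omega,t)=\int_t^T\phi\,dB$. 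I expect the main obstacle to be the rigorous passage from difference quotients to derivatives—securing the $S^2$-convergence together with increment estimates strong enough to feed Kolmogorov—rather than the algebra of the linearized equations, which is routine once the integrability is in place.
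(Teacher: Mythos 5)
Your argument is sound, and at bottom it is the paper's own argument. Note first that the paper offers no separate proof of Corollary \ref{18}: it is read off Theorem \ref{6} immediately, by restricting the $C^{0,0,2}([0,T]^{2}\times\mathbb{R}^{d})$ version of $\{Y^{t,x}_{s}\}$ to the diagonal $s=t$. So what you have in fact done is reconstruct the proof of Theorem \ref{6}, and there your scheme agrees with the paper's (which follows Theorem 2.1 of Pardoux--Peng \cite{Peng}) in all essentials: mean-value linearization of the coefficients, moment estimates modeled on Proposition \ref{8}, the observation that the second line of (H1) makes the $z$-coefficient of the $dB$-integral a strict contraction (note the correct bound is $\|\overline{g}'_{z}\|\leq\sqrt{\alpha}<1$, which is what your wellposedness claim implicitly uses), and Kolmogorov's lemma. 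The one organizational difference is worth flagging: the paper never solves the linearized BDSDE for $(\nabla Y,\nabla Z)$ at this stage. Instead it estimates the difference quotients $\Delta^{i}_{h}Y^{t,x}_{s}$ themselves, proving H\"older-type $L^{p}$ bounds jointly in $(t,x,h)$ and $(t',x',h')$, so that Kolmogorov's lemma applied with $h$ as an extra parameter delivers existence and continuity of the first derivative (and, iterating, the second) in one stroke, with no separate identification step. Your route --- first posing the linear BDSDE for $(\nabla Y,\nabla Z)$ via Proposition \ref{5} and then proving $S^{2}$-convergence of the difference quotients --- is equally valid and has the merit of producing the derivative equation explicitly, which the paper needs anyway later (for Proposition \ref{10} and the identity \eqref{11}); its cost is exactly the convergence argument you yourself flag as the delicate step, and the parametrized-Kolmogorov device is precisely how the paper avoids it. Either way, once $C^{0,0,2}$ regularity of $(t,s,x)\mapsto Y^{t,x}_{s}$ is in hand, the corollary follows by setting $s=t$, as the paper does.
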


\begin{proof}[Proof of Theorem \ref{6}]
We point out that the proof is similar to the proof of Theorem 2.1 of Pardoux and Peng \cite{Peng}.
We first note that we can deduce from Proposition \ref{8} applied to the present situation that,
for any $p\geq 2$, there exist $c_{p}$ and $q$ such that:
\begin{equation*}
  E\bigg[\sup_{t\leq s\leq T} | Y^{t,x}_{s}|^{p} + \left(\int_t^T \|  Z^{t,x}_{s} \|^{2}ds\right)^{\frac{p}{2}} \bigg]\leq c_{p} (1+ |x|^{q}).
\end{equation*}
Next, for $t\vee t' \leq s\leq T$,
\begin{equation*}
 \begin{split}
  Y^{t,x}_{s} - Y&^{t',x'}_{s}= \left[ \int_0^1 h'(X^{t',x'}_{T} + \lambda(X^{t,x}_{T}-X^{t',x'}_{T})) d\lambda \right] [X^{t,x}_{T}-X^{t',x'}_{T}]\\
   &\qq \ \ + \int_s^T \bigg[ \varphi_{r}(t,x;t',x')[X^{t,x}_{r}-X^{t',x'}_{r}] + \psi_{r}(t,x;t',x')[Y^{t,x}_{r}-Y^{t',x'}_{r}]\\
   &\qq \ \ \ \ \ \ \ \ \ \ \ \ \ + \chi_{r}(t,x;t',x')[Z^{t,x}_{r}-Z^{t',x'}_{r}]  \bigg] dr\\
   &\qq \ \ + \int_s^T \bigg[ \overline{\varphi}_{r}(t,x;t',x')[X^{t,x}_{r}-X^{t',x'}_{r}] + \overline{\psi}_{r}(t,x;t',x')[Y^{t,x}_{r}-Y^{t',x'}_{r}]\\
   &\qq \ \ \ \ \ \ \ \ \ \ \ \ \ + \overline{\chi}_{r}(t,x;t',x')[Z^{t,x}_{r}-Z^{t',x'}_{r}]  \bigg] dB_{r} - \int_s^T [Z^{t,x}_{r}-Z^{t',x'}_{r}] dW_{r},
 \end{split}
\end{equation*}
where
\begin{equation*}
 \begin{split}
   \varphi_{r}(t,x;t',x') &= \int_0^1 \overline{f}'_{x}(\Sigma_{r,\lambda}^{t,x;t',x'}) d\lambda;\\
      \psi_{r}(t,x;t',x') &= \int_0^1 \overline{f}'_{y}(\Sigma_{r,\lambda}^{t,x;t',x'}) d\lambda;\\
      \chi_{r}(t,x;t',x') &= \int_0^1 \overline{f}'_{z}(\Sigma_{r,\lambda}^{t,x;t',x'}) d\lambda,
 \end{split}
\end{equation*}
$\overline{\varphi}, \ \overline{\psi}$ and $\overline{\chi}$ are defined analogously, with $\overline{f}$ replaced by $\overline{g}$, and
\begin{equation*}
   \Sigma_{r,\lambda}^{t,x;t',x'} = \big(\overleftarrow{B}_{r},X^{t',x'}_{r} + \lambda(X^{t,x}_{r}-X^{t',x'}_{r}),
    Y^{t',x'}_{r} + \lambda(Y^{t,x}_{r}-Y^{t',x'}_{r}), Z^{t',x'}_{r} + \lambda(Z^{t,x}_{r}-Z^{t',x'}_{r}) \big).
 \end{equation*}
Combining the argument of Proposition \ref{8} with the estimate:
\begin{equation*}
  E\big[\sup_{0\leq s\leq T} | X^{t,x}_{s} - X^{t',x'}_{s}|^{p} \big]\leq c_{p} (1+ |x|^{q}+|x'|^{q}) (|x-x'|^{p} + |t-t'|^{\frac{p}{2}}),
\end{equation*}
we deduce that for all $p\geq 2$, there exists $c_{p}$ and $q$ such that
\begin{equation*}
 \begin{split}
   &E\bigg[\sup_{0\leq s\leq T} | Y^{t,x}_{s} - Y^{t',x'}_{s}|^{p}
   + \left(\int_t^T \|  Z^{t,x}_{s} -  Z^{t',x'}_{s} \|^{2}ds\right)^{\frac{p}{2}} \bigg]\\
   \leq& c_{p} (1+ |x|^{q}+|x'|^{q}) (|x-x'|^{p} + |t-t'|^{\frac{p}{2}}).
 \end{split}
\end{equation*}
Note that (H1) implies that $\|\overline{\chi}_{r}\| \leq \alpha<1$.
We conclude from the last estimate, using Kolmogorov's lemma,
that $\{Y^{t,x}_{s}; (t,s)\in [0,T]^{2}, x\in\mathbb{R}^{d} \}$ has an a.s. continuous version.

Next, we define
\begin{equation*}
  \Delta_{h}^{i} X^{t,x}_{s} := \frac{X^{t,x+h_{e_{i}}}_{s}-X^{t,x}_{s}}{h},
\end{equation*}
where $h\in \mathbb{R}\ \{0\}$, $\{ e_{1},e_{2},...,e_{3} \}$ is an orthonormal basis of $\mathbb{R}^{d}$.
$\Delta_{h}^{i} Y^{t,x}_{s}$ and $\Delta_{h}^{i} Z^{t,x}_{s}$ are defined analogously.
We have
\begin{equation*}
  \begin{split}
\Delta_{h}^{i}&Y^{t,x}_{s}= \int_0^1 h'(X^{t,x}_{T} + \lambda h \Delta_{h}^{i} X^{t,x}_{T}) \Delta_{h}^{i} X^{t,x}_{T} d\lambda
     - \int_s^T \Delta_{h}^{i} Z^{t,x}_{r} dW_{r}\\
    &\qq \ \ + \int_s^T\int_0^1 \big[ \overline{f}'_{x}(\Xi_{r,\lambda}^{t,x,h})\Delta_{h}^{i} X^{t,x}_{r}
     + \overline{f}'_{y}(\Xi_{r,\lambda}^{t,x,h})\Delta_{h}^{i} Y^{t,x}_{r}
     + \overline{f}'_{z}(\Xi_{r,\lambda}^{t,x,h})\Delta_{h}^{i} Z^{t,x}_{r} \big] d\lambda dr\\
    &\qq \ \ + \int_s^T\int_0^1 \big[ \overline{g}'_{x}(\Xi_{r,\lambda}^{t,x,h})\Delta_{h}^{i} X^{t,x}_{r}
     + \overline{g}'_{y}(\Xi_{r,\lambda}^{t,x,h})\Delta_{h}^{i} Y^{t,x}_{r}
     + \overline{g}'_{z}(\Xi_{r,\lambda}^{t,x,h})\Delta_{h}^{i} Z^{t,x}_{r} \big] d\lambda dB_{r},\\
  \end{split}
\end{equation*}
where
$ \Xi_{r,\lambda}^{t,x,h} = \big(\overleftarrow{B}_{r}, X^{t,x}_{r} + \lambda h \Delta_{h}^{i} X^{t,x}_{r},
    Y^{t,x}_{r} + \lambda h \Delta_{h}^{i} Y^{t,x}_{r}, Z^{t,x}_{r} + \lambda h \Delta_{h}^{i} Z^{t,x}_{r} \big).$
We note that for each $p \geq 2$, there exists $c_{p}$ such that
\begin{equation*}
  E\big[\sup_{0\leq s\leq T} |\Delta_{h}^{i} X^{t,x}_{s}|^{p} \big]\leq c_{p}.
\end{equation*}
The same estimates as above yields
\begin{equation*}
  E\bigg[\sup_{t\leq s\leq T} |\Delta_{h}^{i} Y^{t,x}_{s}|^{p} + \left(\int_t^T \| \Delta_{h}^{i} Z^{t,x}_{s} \|^{2}ds\right)^{\frac{p}{2}} \bigg]
  \leq c_{p}(1 + |x|^{q} + |h|^{q}).
\end{equation*}
Finally, we consider
\begin{equation*}
  \begin{split}
\Delta_{h}^{i}&Y^{t,x}_{s} -\Delta_{h'}^{i}Y^{t',x'}_{s}
  = - \int_s^T \big[ \Delta_{h}^{i} Z^{t,x}_{r} -  \Delta_{h'}^{i} Z^{t',x'}_{r} \big] dW_{r}\\
    &\qq + \int_0^1 \big[h'(X^{t,x}_{T} + \lambda h \Delta_{h}^{i} X^{t,x}_{T}) \Delta_{h}^{i} X^{t,x}_{T}
                     -h'(X^{t',x'}_{T} + \lambda h' \Delta_{h'}^{i} X^{t',x'}_{T}) \Delta_{h'}^{i} X^{t',x'}_{T} \big]d\lambda\\
    &\qq + \int_s^T\int_0^1 \big[ \overline{f}'_{x}(\Xi_{r,\lambda}^{t,x,h})\Delta_{h}^{i} X^{t,x}_{r}
                             -\overline{f}'_{x}(\Xi_{r,\lambda}^{t',x',h'})\Delta_{h'}^{i} X^{t',x'}_{r} \big] d\lambda dr\\
    &\qq + \int_s^T\int_0^1 \big[ \overline{f}'_{y}(\Xi_{r,\lambda}^{t,x,h})\Delta_{h}^{i} Y^{t,x}_{r}
                             -\overline{f}'_{y}(\Xi_{r,\lambda}^{t',x',h'})\Delta_{h'}^{i} Y^{t',x'}_{r} \big] d\lambda dr\\
    &\qq + \int_s^T\int_0^1 \big[ \overline{f}'_{z}(\Xi_{r,\lambda}^{t,x,h})\Delta_{h}^{i} Z^{t,x}_{r}
                             -\overline{f}'_{z}(\Xi_{r,\lambda}^{t',x',h'})\Delta_{h'}^{i} Z^{t',x'}_{r} \big] d\lambda dr\\
    &\qq + \int_s^T\int_0^1 \big[ \overline{g}'_{x}(\Xi_{r,\lambda}^{t,x,h})\Delta_{h}^{i} X^{t,x}_{r}
                             -\overline{g}'_{x}(\Xi_{r,\lambda}^{t',x',h'})\Delta_{h'}^{i} X^{t',x'}_{r} \big] d\lambda dB_{r}\\
    &\qq + \int_s^T\int_0^1 \big[ \overline{g}'_{y}(\Xi_{r,\lambda}^{t,x,h})\Delta_{h}^{i} Y^{t,x}_{r}
                             -\overline{g}'_{y}(\Xi_{r,\lambda}^{t',x',h'})\Delta_{h'}^{i} Y^{t',x'}_{r} \big] d\lambda dB_{r}\\
    &\qq + \int_s^T\int_0^1 \big[ \overline{g}'_{z}(\Xi_{r,\lambda}^{t,x,h})\Delta_{h}^{i} Z^{t,x}_{r}
                             -\overline{g}'_{z}(\Xi_{r,\lambda}^{t',x',h'})\Delta_{h'}^{i} Z^{t',x'}_{r} \big] d\lambda dB_{r}.
  \end{split}
\end{equation*}
We note that
\begin{equation*}
  E\big[\sup_{0\leq s\leq T} |\Delta_{h}^{i} X^{t,x}_{r}-\Delta_{h'}^{i} X^{t',x'}_{r}|^{p}\big]
   \leq c_{p}(1+ |x|^{q}+|x'|^{q})(|x-x'|^{p} + |h-h'|^{p} + |t-t'|^{\frac{p}{2}}),
\end{equation*}
and
\begin{equation*}
  \begin{split}
    |\Xi_{r,\lambda}^{t,x,h} - \Xi_{r,\lambda}^{t',x',h'}| \leq& \big( |X^{t,x}_{r}-X^{t',x'}_{r}| + |X^{t,x+h_{e_{i}}}_{r}-X^{t',x'+h'_{e_{i}}}_{r}|\\
     & + |Y^{t,x}_{r}-Y^{t',x'}_{r}| + |Y^{t,x+h_{e_{i}}}_{r}-Y^{t',x'+h'_{e_{i}}}_{r}|\\
     & + \|Z^{t,x}_{r}-Z^{t',x'}_{r}\| + \|Z^{t,x+h_{e_{i}}}_{r}-Z^{t',x'+h'_{e_{i}}}_{r}\| \big).
  \end{split}
\end{equation*}
Using similar arguments as those in Proposition \ref{8}, combined with those of Theorem 2.9 in Pardoux and Peng \cite{Peng92}, we show that
\begin{equation*}
 \begin{split}
   &E\bigg[\sup_{0\leq s\leq T} | \Delta_{h}^{i} Y^{t,x}_{s} - \Delta_{h'}^{i} Y^{t',x'}_{s}|^{p}
   + \left(\int_{t\vee t'}^T \|  \Delta_{h}^{i}Z^{t,x}_{s} -  \Delta_{h'}^{i}Z^{t',x'}_{s} \|^{2}ds\right)^{\frac{p}{2}} \bigg]\\
   \leq& c_{p} (1+|x|^{q}+|x'|^{q}+|h|^{q}+|h'|^{q}) \times (|x-x'|^{p} + |h-h'|^{p} + |t-t'|^{\frac{p}{2}}).
 \end{split}
\end{equation*}
The existence of a continuous derivative of $Y^{t,x}_{s}$ with respect to $x$ follows easily from the above estimate,
as well as the existence of a mean-square derivative of $Z^{t,x}_{s}$ with respect to $x$,
which is mean square continuous in $(s,t,x)$.
The existence of a continuous second derivative of $Y^{t,x}_{s}$ with respect to $x$ is proved in a similar fashion.
\end{proof}

From Lemma 2.3 of Pardoux-Peng \cite{Peng92}, we can get the following result similarly.

\begin{lemma}\label{19}
Let $Z\in M^{2}(t,T;\mathbb{R}^{d})$ be such that $\xi:=\int_t^T Z_{r} dW_{r}$ satisfies $\xi \in \mathbb{D}^{1,2}$.
Then
\begin{equation*}\label{}
  Z_{i}\in L^{2}(t,T;\mathbb{D}^{1,2}), \ \ 1\leq i\leq d,
\end{equation*}
and for $t\leq s \leq T$,
\begin{equation*}\label{}
  D_{s}^{i}\xi = (Z_{s})_{i} + \int_s^T  D_{s}^{i} Z_{r}dW_{r},
\end{equation*}
where $(Z_{s})_{i}$ denotes the $i$-th component of $Z_{s}$.
\end{lemma}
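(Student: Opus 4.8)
The plan is to prove the two assertions simultaneously by first reducing to the classical one--Brownian--motion setting and then running an approximation argument whose engine is an $L^2$--isometry. Since the derivation operator $D$ acts only on the $W$--variables (cf. (\ref{0}) and the Example, where $D_tF=0$ for $F$ built from $B$ alone) and since $W$ and $B$ are independent, I may treat the $B$--randomness as a frozen independent parameter; the statement then becomes the standard fact that the It\^o integral and the Malliavin derivative commute, together with its converse. The identity $D^i_s\xi=(Z_s)_i+\int_s^T D^i_sZ_r\,dW_r$ should be read together with the $\mathcal F_r$--measurability of $Z_r$, which forces $D^i_sZ_r=0$ for $r<s$, so the stochastic integral legitimately starts at $s$.

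First I would verify the formula for a dense class of integrands, namely simple adapted processes $Z_r=\sum_k F_k\,\mathbf 1_{(t_k,t_{k+1}]}(r)$ with $F_k\in\mathbb S$ and $\mathcal F_{t_k}$--measurable. For such $Z$ the integral is the finite sum $\xi=\sum_k F_k\,(W_{t_{k+1}}-W_{t_k})\in\mathbb D^{1,2}$, and the chain rule (\ref{0}) gives $D^i_s\xi=(Z_s)_i+\int_s^T D^i_sZ_r\,dW_r$, the term $(Z_s)_i$ arising as the diagonal contribution from differentiating the $i$--th component of the integrand. The key computation is the resulting isometry: squaring the $i$--th component, summing over $i$, integrating in $s$ over $[t,T]$ and taking expectations, the cross term vanishes because $(Z_s)_i$ is $\mathcal F_s$--measurable while $\int_s^T D^i_sZ_r\,dW_r$ is a mean--zero martingale increment, and the It\^o isometry yields
\begin{equation*}
  E\int_t^T|D_s\xi|^2\,ds = E\int_t^T|Z_s|^2\,ds + E\int_t^T\!\!\int_s^T|D_sZ_r|^2\,dr\,ds.
\end{equation*}
Thus $\|D\xi\|_{L^2(\Omega\times[t,T])}$ controls the full norm of $Z$ in $L^2(t,T;\mathbb D^{1,2})$.

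To pass to a general $Z\in M^2(t,T;\mathbb R^d)$ with $\xi\in\mathbb D^{1,2}$, I would approximate $\xi$ in the $\|\cdot\|_{1,2}$--norm by $\xi^n=\int_t^T Z^n_r\,dW_r$ with $Z^n$ simple and adapted as above. The isometry applied to the differences $\xi^n-\xi^m$ shows that $(Z^n)$ is Cauchy in $L^2(t,T;\mathbb D^{1,2})$, hence converges to some $\widetilde Z$ in that space; since $\xi^n\to\xi$ in $M^2$ forces $Z^n\to Z$ in $M^2$, we get $\widetilde Z=Z$, so $Z_i\in L^2(t,T;\mathbb D^{1,2})$ for each $i$. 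Letting $n\to\infty$ in the simple--process identity, using the closedness of $D$ and the continuity of the It\^o integral, then yields $D^i_s\xi=(Z_s)_i+\int_s^T D^i_sZ_r\,dW_r$ for a.e.\ $(s,\omega)$ and $t\le s\le T$.

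The main obstacle is the approximation step: one must produce simple adapted $Z^n$ whose It\^o integrals converge to $\xi$ \emph{in $\mathbb D^{1,2}$}, not merely in $L^2(\Omega)$, and a generic $\|\cdot\|_{1,2}$--approximation of $\xi$ by elements of $\mathbb S$ need not be an It\^o integral of an adapted process. The clean remedy is to route the argument through the Wiener chaos decomposition of $\xi$: expanding $\xi=\sum_{n\ge1}I_n(g_n)$, the adaptedness of $Z$ pins down the kernels $g_n$ in terms of those of $Z$, the condition $\xi\in\mathbb D^{1,2}$ reads $\sum_n n\,n!\,\|g_n\|^2<\infty$, and truncating the chaos series supplies exactly the required approximations while making the isometry transparent term by term. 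Once this chaos bookkeeping is in place — this is where the $d$--dimensional index structure and the $\mathcal F_r$--support constraints on the kernels must be handled carefully — both conclusions follow at once, the reduction of freezing $B$ being routine since $D$ ignores the $B$--coordinates.
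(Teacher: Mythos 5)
The paper offers no proof of Lemma \ref{19}: it states only that the result follows ``similarly'' from Lemma 2.3 of Pardoux--Peng \cite{Peng92}, and the proof of that cited lemma is precisely the chaos-expansion argument you arrive at. So your proposal is correct and in substance coincides with the argument behind the paper's citation: the simple-adapted-process identity, the isometry
$E\int_t^T|D_s\xi|^2\,ds=E\int_t^T|Z_s|^2\,ds+E\int_t^T\int_s^T|D_sZ_r|^2\,dr\,ds$,
and --- as you rightly diagnose --- the Wiener chaos decomposition as the clean way to manufacture approximations of $\xi$ in $\|\cdot\|_{1,2}$ that are themselves It\^{o} integrals of simple adapted processes (a generic $\mathbb{S}$-approximation would not be). Two points deserve one more line each in a finished write-up. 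First, your isometry integrates $s$ only over $[t,T]$, but $Z_r$ is $\mathcal{F}_r$-measurable and may depend on $W$ before time $t$; to conclude $Z_i\in L^2(t,T;\mathbb{D}^{1,2})$ you also need control of $D_\theta Z_r$ for $\theta\in[0,t)$, which comes from the companion identity $D_\theta\xi=\int_t^T D_\theta Z_r\,dW_r$ (no diagonal term since $\theta<t\le r$). Second, in this paper's two-noise setting $Z_r$ is $\mathcal{F}^W_r\vee\mathcal{F}^B_{r,T}$-measurable, so ``freezing $B$'' should be implemented concretely as the $W$-chaos expansion with $L^2(\mathcal{F}^B_T)$-valued kernels (equivalently, by conditioning on $\sigma(B)$), and the vanishing of your cross term uses that $W$ remains a Brownian motion with respect to the enlarged filtration $\mathcal{F}^W_r\vee\mathcal{F}^B_{s,T}$ --- harmless, as you say, because $D$ ignores the $B$-coordinates, but worth stating explicitly since $\{\mathcal{F}_t\}$ here is not a filtration.
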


It turns out that under the assumptions of Theorem \ref{6}, the components of $X^{t,x}_{s}, \ Y^{t,x}_{s}$ and $Z^{t,x}_{s}$ take values in $\mathbb{D}^{1,2}$.
From Pardoux and  Peng \cite{Peng92}, we have the next formula,
\begin{equation}\label{7}
  D_{\theta}X^{t,x}_{s} = \nabla X^{t,x}_{s} (\nabla X^{t,x}_{\theta})^{-1}\sigma(X^{t,x}_{\theta}), \ \ \ t\leq \theta\leq s\leq T.
\end{equation}

Dropping the superscript $t,x$ for convenience, let us now express $Z$ in terms of the Wiener space derivative of $Y$.
\begin{proposition}\label{10}
Under the assumptions of Theorem \ref{6}, $Y,Z\in L^{2}(t,T;\mathbb{D}^{1,2})$,
 and a version of $\{ (D_{\theta}Y_{s},D_{\theta}Z_{s}); t\leq \theta\leq T; t\leq s\leq T \}$ is given by:\\

(i) \ \ $D_{\theta}Y_{s}=0, D_{\theta}Z_{s}=0;$ for $ t\leq s< \theta\leq T$;

(ii) \ For any fixed $\theta\in [t,T]$ and $1\leq i\leq d,$ $\{(D^{i}_{\theta}Y_{s},D^{i}_{\theta}Z_{s}); t\leq \theta\leq s\leq T \}$
is the unique solution of BDSDE:
\begin{equation}\label{9}
 \begin{split}
   D^{i}_{\theta}Y_{s} =& h'(X_{T})D^{i}_{\theta}X_{T} +  \int_s^T F_{i}(\omega,r,D^{i}_{\theta}X_{r}, D^{i}_{\theta}Y_{r}, D^{i}_{\theta}Z_{r}) dr \\
               &+ \int_s^T G_{i}(\omega,r,D^{i}_{\theta}X_{r}, D^{i}_{\theta}Y_{r}, D^{i}_{\theta}Z_{r}) dB_{r} - \int_s^T D^{i}_{\theta}Z_{r} dW_{r},
 \end{split}
\end{equation}
where
\begin{equation}\label{15}
 \begin{split}
   F_{i}(\omega,r,x, y, z) =& f'_{x}(\omega,r,X_{r},Y_{r},Z_{r})x+f'_{y}(\omega,r,X_{r},Y_{r},Z_{r})y \\
   &+ f'_{z}(\omega,r,X_{r},Y_{r},Z_{r})z,\\
   G_{i}(\omega,r,x, y, z) =& g'_{x}(\omega,r,X_{r},Y_{r},Z_{r})x+g'_{y}(\omega,r,X_{r},Y_{r},Z_{r})y \\
   &+ g'_{z}(\omega,r,X_{r},Y_{r},Z_{r})z.\\
 \end{split}
\end{equation}
Moreover, $\{D^{i}_{s}Y_{s} ; t\leq s\leq T \}$ is a version of $\{(Z_{s})_{i} ; t\leq s\leq T \}$,
where $(Z_{s})_{i}$ denote the $i$-th column of the matrix $Z$.
\end{proposition}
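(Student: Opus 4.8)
The plan is to follow the classical Malliavin-calculus argument of Pardoux and Peng \cite{Peng92}, adapted to the doubly stochastic setting, combining a Picard approximation with the closability of the operator $D$. First I would set up the Picard scheme for BDSDE (\ref{2}): starting from $(Y^0,Z^0)=(0,0)$, define
\begin{equation*}
  Y^{n+1}_s = h(X_T) + \int_s^T f(r,X_r,Y^n_r,Z^n_r)\,dr + \int_s^T g(r,X_r,Y^n_r,Z^n_r)\,dB_r - \int_s^T Z^{n+1}_r\,dW_r ,
\end{equation*}
which converges to $(Y,Z)$ in $S^{2}\times M^{2}$ by the contraction underlying Proposition \ref{5}. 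I would then prove by induction on $n$ that each $Y^n_s,Z^n_s\in\mathbb{D}^{1,2}$ with $Y^n,Z^n\in L^{2}(t,T;\mathbb{D}^{1,2})$, that $D_\theta Y^n_s=D_\theta Z^n_s=0$ for $s<\theta$ (this is just the fact that $Y^n_s,Z^n_s$ are $\mathcal{F}_s$-measurable, so their $W$-derivative is supported on $[t,s]$, which also gives assertion (i) in the limit), and that for $\theta\leq s$ the pair $(D^i_\theta Y^{n+1}_s,D^i_\theta Z^{n+1}_s)$ solves the linear BDSDE obtained by formally differentiating the scheme. The induction step rests on three ingredients: the chain rule for $D$ (applicable since $h,\overline f,\overline g$ are $C^3$ with bounded derivatives and the arguments lie in $\mathbb{D}^{1,2}$), the explicit formula (\ref{7}) together with $D_\theta X_s=0$ for $\theta>s$, and the commutation of $D_\theta$ with the three integrals.

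The commutation with the $dr$-integral is routine, and the commutation with the forward $dW$-integral is exactly Lemma \ref{19}. The genuinely new point relative to \cite{Peng92} is the backward $dB$-integral: since $D$ is the Malliavin derivative in the $W$-directions only (see (\ref{0}) and the Example, where a functional of $B$ alone has vanishing derivative) and $B$ is independent of $W$, I expect $D_\theta\int_s^T g_r\,dB_r=\int_s^T D_\theta g_r\,dB_r$ with no boundary contribution, and in particular $D_\theta\overleftarrow B_r=0$; this last fact is precisely why the linearized coefficients $F_i,G_i$ in (\ref{15}) carry no term in the first ($\mathbb{R}^{l}$) slot of $\overline f,\overline g$. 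Establishing this commutation rigorously for the backward It\^o (It\^o--Skorohod) integral against $B$ is the step I expect to be the main obstacle; I would justify it by approximating $g_r$ by simple processes and invoking the independence of the two Wiener spaces.

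Next I would derive, from (H1)--(H3), uniform (in $n,\theta,i$) a priori bounds of the form $\sup_n E\int_t^T\!\!\int_t^T\big(|D_\theta Y^n_s|^{2}+\|D_\theta Z^n_s\|^{2}\big)\,ds\,d\theta<\infty$, using the standard energy estimate for the linear BDSDEs in which the bound $\|\overline\chi_r\|\leq\alpha<1$ (equivalently the $\overline g'_z$ control in (H3)) absorbs the $dB$-martingale term, together with the integrability of $\nabla X$ and $h'(X_T)$ guaranteed by Theorem \ref{6} and Proposition \ref{8}. Combining these bounds with the convergence $(Y^n,Z^n)\to(Y,Z)$ in $M^{2}$ and the closability of $D$, I conclude that $Y_s,Z_s\in\mathbb{D}^{1,2}$, that $DY^n\to DY$ and $DZ^n\to DZ$, and, passing to the limit in the linear BDSDE for the derivatives, that $(D^i_\theta Y_s,D^i_\theta Z_s)$ solves (\ref{9}).

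Finally, for the identification $D^i_s Y_s=(Z_s)_i$ I would differentiate BDSDE (\ref{2}) written from $t$ to $T$ and evaluate the $W$-derivative at a point $\theta\in(t,T]$. Since $Y_t$ is $\mathcal{F}^{W}_t\vee\mathcal{F}^{B}_{t,T}$-measurable we have $D_\theta Y_t=0$ for $\theta>t$, while Lemma \ref{19} produces the boundary term $(Z_\theta)_i$ from $D^i_\theta\int_t^T Z_r\,dW_r$ and the remaining integrals start at $\theta$ because $D^i_\theta X_r=D^i_\theta Y_r=D^i_\theta Z_r=0$ for $r<\theta$. Solving the resulting identity for $(Z_\theta)_i$ leaves exactly the right-hand side of (\ref{9}) at $s=\theta$, namely $D^i_\theta Y_\theta$, whence $(Z_\theta)_i=D^i_\theta Y_\theta$ for a.e. $\theta\in[t,T]$.
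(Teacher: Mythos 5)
Your proposal is correct, and its core mechanics coincide with the paper's own proof: differentiate the BDSDE via the chain rule, observe that $D_\theta\overleftarrow{B}_r=0$ because $D$ defined in \eqref{0} acts only in the $W$-directions (exactly your explanation of why $F_i,G_i$ in \eqref{15} carry no derivative of $\overline f,\overline g$ in the first slot), arrive at the linear BDSDE \eqref{9}, and identify $(Z_s)_i=D^i_sY_s$ by rewriting the equation forward from $t$, applying Lemma \ref{19} to $\int_t^s Z_r\,dW_r$, and reading off the jump of $D_\theta Y_s$ across the diagonal $\theta=s$ --- your final paragraph is the paper's argument almost verbatim. Where you genuinely differ is in how $Y,Z\in L^{2}(t,T;\mathbb{D}^{1,2})$ is obtained: the paper asserts this membership just before the proposition (``It turns out that\dots'') and then differentiates the equation directly, invoking Picard iteration only for the well-posedness of the linear equation \eqref{9}, whereas you run the Picard scheme \emph{through} the Malliavin derivative --- induction on the differentiability of the iterates, uniform energy bounds in which $\|\overline g'_z\|\le\sqrt{\alpha}<1$ absorbs the $dB$-term, then closability of $D$ --- which is the route of Pardoux--Peng \cite{Peng92} and in fact supplies the one step the paper compresses. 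So your version buys rigor on the differentiability claim at the cost of length; the paper buys brevity by leaning on the standard approximation argument. Two details to tighten in your write-up: (a) a uniform bound plus closability yields by itself only weak convergence of $(DY^n,DZ^n)$ along a subsequence (see Nualart \cite{Nualart06}); either note that weak convergence already suffices to pass to the limit in the \emph{linear} equation \eqref{9}, or show the differentiated scheme is Cauchy via the same contraction estimate used for Proposition \ref{5}; (b) for the commutation of $D_\theta$ with the backward $dB$-integral, which you rightly flag as the new ingredient relative to \cite{Peng92}, the simple-process approximation should record explicitly that no boundary term appears precisely because $\theta$ indexes the $W$-directions and $B$ is independent of $W$ --- the paper uses this fact silently.
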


\begin{proof}
We restrict ourselves to the case $d=1$, and note that $f'_{x}(\omega,r,X_{r},Y_{r},$ $Z_{r})$, $f'_{y}(\omega,r,X_{r},Y_{r},Z_{r})$
and $f'_{z}(\omega,r,X_{r},Y_{r},Z_{r})$ are bounded in $L^{2}(\Omega\times [0,T])$.

The item (i) is trivial consequence of the fact that $Y_{s}$ and $Z_{s}$ are $\mathcal{F}_{s}$-measurable.
Now we consider the item (ii).
From Eq. (\ref{2}) and the definition (\ref{0}), note that $t\leq \theta\leq s\leq T$, we deduce
\begin{equation*}\label{}
 \begin{split}
   D^{i}_{\theta}Y_{s} =& h'(X_{T})D^{i}_{\theta}X_{T}
      +  \int_s^T D^{i}_{\theta}f(\omega,r,X_{r},Y_{r},Z_{r}) dr \\
     &+ \int_s^T D^{i}_{\theta}g(\omega,r,X_{r},Y_{r},Z_{r}) dB_{r} - \int_s^T D^{i}_{\theta}Z_{r} dW_{r},
 \end{split}
\end{equation*}
where
\begin{equation*}
 \begin{split}
     &D^{i}_{\theta}f(\omega,r,X_{r},Y_{r},Z_{r})
   = D^{i}_{\theta}\overline{f}(\overleftarrow{B}_{r},X_{r},Y_{r},Z_{r})\\
   =& \overline{f}_{x}'(\overleftarrow{B}_{r},X_{r},Y_{r},Z_{r})D^{i}_{\theta}X_{r}
     +\overline{f}_{y}'(\overleftarrow{B}_{r},X_{r},Y_{r},Z_{r})D^{i}_{\theta}Y_{r}
     +\overline{f}_{z}'(\overleftarrow{B}_{r},X_{r},Y_{r},Z_{r})D^{i}_{\theta}Z_{r} \\
   =& f_{x}'(\omega,r,X_{r},Y_{r},Z_{r})D^{i}_{\theta}X_{r}
     +f_{y}'(\omega,r,X_{r},Y_{r},Z_{r})D^{i}_{\theta}Y_{r}
     +f_{z}'(\omega,r,X_{r},Y_{r},Z_{r})D^{i}_{\theta}Z_{r},
 \end{split}
\end{equation*}
and similarly,
\begin{equation*}
 \begin{split}
     &D^{i}_{\theta}g(\omega,r,X_{r},Y_{r},Z_{r})\\
   =& g_{x}'(\omega,r,X_{r},Y_{r},Z_{r})D^{i}_{\theta}X_{r}
     +g_{y}'(\omega,r,X_{r},Y_{r},Z_{r})D^{i}_{\theta}Y_{r}
     +g_{z}'(\omega,r,X_{r},Y_{r},Z_{r})D^{i}_{\theta}Z_{r}.
 \end{split}
\end{equation*}
Hence we have
\begin{equation*}
 \begin{split}
   D^{i}_{\theta}Y_{s} =& h'(X_{T})D^{i}_{\theta}X_{T} +  \int_s^T F_{i}(\omega,r,D^{i}_{\theta}X_{r}, D^{i}_{\theta}Y_{r}, D^{i}_{\theta}Z_{r}) dr \\
               &+ \int_s^T G_{i}(\omega,r,D^{i}_{\theta}X_{r}, D^{i}_{\theta}Y_{r}, D^{i}_{\theta}Z_{r}) dB_{r} - \int_s^T D^{i}_{\theta}Z_{r} dW_{r}.
 \end{split}
\end{equation*}
Now the existence and uniqueness of solutions of above equation follows easily from the results of Pardoux and Peng \cite{Peng90}
by using the same kind of method, i.e., the Picard iteration.

Finally, since for $t<\theta\leq s\leq T$,
\begin{equation*}
 \begin{split}
  Y_{s} =& Y_{t} - \int_t^s f(\omega,r,X_{r},Y_{r},Z_{r}) dr
                 - \int_t^s g(\omega,r,X_{r},Y_{r},Z_{r}) dB_{r} + \int_t^s Z_{r}dW_{r}, \\
  D^{i}_{\theta} Y_{s}=&(Z_{\theta})_{i} + \int_\theta^s D^{i}_{\theta}Z_{r} dW_{r}\\
     &- \int_\theta^s \big[ f_{x}'(\omega,r,X_{r},Y_{r},Z_{r})D^{i}_{\theta}X_{r} +f_{y}'(\omega,r,X_{r},Y_{r},Z_{r})D^{i}_{\theta}Y_{r}\\
     & \ \ \ \ \ \ \ \ \    +f_{z}'(\omega,r,X_{r},Y_{r},Z_{r})D^{i}_{\theta}Z_{r}\big] dr\\
     &- \int_\theta^s \big[ g_{x}'(\omega,r,X_{r},Y_{r},Z_{r})D^{i}_{\theta}X_{r} +g_{y}'(\omega,r,X_{r},Y_{r},Z_{r})D^{i}_{\theta}Y_{r}\\
     & \ \ \ \ \ \ \ \ \    +g_{z}'(\omega,r,X_{r},Y_{r},Z_{r})D^{i}_{\theta}Z_{r}\big] dB_{r},
 \end{split}
\end{equation*}
for a.e. $s$, the jump of $D_{\theta} Y_{s}$ at $\theta=s$ equals $Z_{s}$.
With the version of $D_{\theta} Y_{s}$ that we have chosen above, that means exactly that
\begin{equation*}
  D_{s}Y_{s}=Z_{s}, \ \ \ a.e.
\end{equation*}
Our proof is completed.
\end{proof}

We next want to show that $\{D_{s}Y_{s} ; t\leq s\leq T \}$ processes an a.s. continuous version.
It is easy to deduce, as in Pardoux and Peng \cite{Peng92}, that
 $\big\{ \big(\nabla Y_{s}=\frac{\partial Y^{t,x}_{s}}{\partial x},\nabla Z_{s}=\frac{\partial Z^{t,x}_{s}}{\partial x}\big) \big\}$
is the unique solution of BDSDE:
\begin{equation*}
 \begin{split}
   \nabla Y_{s} =& h'(X_{T})\nabla X_{T} +  \int_s^T F(\omega,r, \nabla X_{r},\nabla Y_{r}, \nabla Z_{r}) dr \\
               &+ \int_s^T G(\omega,r, \nabla X_{r}, \nabla Y_{r}, \nabla Z_{r}) dB_{r} - \int_s^T \nabla Z_{r} dW_{r},
 \end{split}
\end{equation*}
where $F$ and $G$ are defined  in (\ref{15}).

From the uniqueness of the solutions of BDSDE (\ref{9}) and the formula (\ref{7}), we directly have
\begin{equation}\label{11}
  D_{\theta}Y_{s} = \nabla Y_{s} (\nabla X_{\theta})^{-1}\sigma(X_{\theta}), \ \ \ t\leq \theta\leq s\leq T.
\end{equation}
and the process $\{D_{s}Y_{s} ; t\leq s\leq T \}$ as defined by Proposition \ref{10} is a.s. continuous by virtue of the continuity
of $\nabla Y_{s},\nabla X_{s}$ and $X_{s}$.

Now an immediate consequence of Proposition \ref{10} and Eq. (\ref{11}) is:
\begin{proposition}\label{16}
The random field $\{Z^{t,x}_{s}; t\leq s\leq T,x\in\mathbb{R}^{d} \}$  has an a.s. continuous version which is given by:
\begin{equation*}
  Z^{t,x}_{s} = \nabla Y^{t,x}_{s} (\nabla X^{t,x}_{\theta})^{-1}\sigma(X^{t,x}_{\theta}),
\end{equation*}
and in particular
\begin{equation*}
  Z^{t,x}_{t} = \nabla Y^{t,x}_{t}\sigma(x).
\end{equation*}
\end{proposition}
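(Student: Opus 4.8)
The plan is to read the representation off directly from the two facts just assembled: the diagonal identity $D_{s}Y_{s}=Z_{s}$ established in Proposition \ref{10}, and the explicit off-diagonal formula (\ref{11}), namely $D_{\theta}Y_{s}=\nabla Y_{s}(\nabla X_{\theta})^{-1}\sigma(X_{\theta})$ for $t\leq\theta\leq s\leq T$. The proposition is thus essentially an immediate corollary of these, and the only real work is to upgrade an almost-everywhere identity to a genuinely pathwise-continuous representative.

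First I would introduce the candidate continuous process by evaluating (\ref{11}) on the diagonal $\theta=s$, setting
\begin{equation*}
  \widetilde{Z}^{t,x}_{s} := \nabla Y^{t,x}_{s}\,(\nabla X^{t,x}_{s})^{-1}\sigma(X^{t,x}_{s}).
\end{equation*}
Since $\sigma\in C^{3}_{b}$, since $\nabla Y^{t,x}_{s}$ admits an a.s.\ continuous version by Theorem \ref{6}, and since $X^{t,x}_{s}$ and $\nabla X^{t,x}_{s}$ admit a.s.\ continuous versions by the flow regularity recalled at the start of Section 3 (with $\nabla X^{t,x}_{s}$ invertible and its inverse a.s.\ continuous), the field $\widetilde{Z}$ is a.s.\ continuous in $(t,s,x)$. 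This continuity was already noted in the discussion preceding the statement, immediately after (\ref{11}).

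Next I would identify $\widetilde{Z}$ with $Z$. By Proposition \ref{10}, $Z_{s}=D_{s}Y_{s}$ for a.e.\ $s$, almost surely; while the right-hand side of (\ref{11}), being continuous up to the diagonal, extends to $\theta=s$ and yields $D_{s}Y_{s}=\widetilde{Z}_{s}$ for a.e.\ $s$. Hence $\widetilde{Z}_{s}=Z_{s}$ for a.e.\ $(s,\omega)$, so the continuous process $\widetilde{Z}$ is the desired a.s.\ continuous version of $Z$. Finally, specializing to $s=t$ and using $X^{t,x}_{t}=x$, hence $\nabla X^{t,x}_{t}=I$, gives $(\nabla X^{t,x}_{t})^{-1}=I$ and $\sigma(X^{t,x}_{t})=\sigma(x)$, so that $Z^{t,x}_{t}=\nabla Y^{t,x}_{t}\sigma(x)$.

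The step requiring the most care is this matching: passing from the a.e.-in-$s$ equality $Z_{s}=D_{s}Y_{s}$ to a pathwise-continuous representative. One must check that the diagonal evaluation $\theta=s$ of the continuous field $(\theta,s)\mapsto D_{\theta}Y_{s}$ is consistent with the a.e.-defined process $Z$, which is precisely where the continuity of $\nabla Y$, $\nabla X$, $X$ and the invertibility of $\nabla X$ enter. Everything else is a direct combination of Proposition \ref{10} and Eq.\ (\ref{11}).
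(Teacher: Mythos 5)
Your proposal is correct and takes essentially the same route as the paper, which states Proposition \ref{16} as an immediate consequence of Proposition \ref{10} and Eq.~(\ref{11}), with the continuity of $\nabla Y_{s}$, $\nabla X_{s}$ and $X_{s}$ already invoked in the preceding discussion to give the a.s.\ continuous diagonal process. Your write-up merely makes explicit what the paper leaves implicit --- the diagonal evaluation $\theta=s$, the a.e.\ matching with $Z_{s}=D_{s}Y_{s}$, and the specialization $s=t$ via $\nabla X^{t,x}_{t}=I$ --- all of which is the intended reading.
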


\section{BDSDEs with random coefficients and quasilinear SPDEs}

We now relate our BDSDE to the following system of quasilinear backward stochastic partial differential equations:

\begin{equation}\label{3}
 \begin{split}
  u(t,x)=& h(x) + \int_t^T \big[\mathcal{L}u(s,x) + f(\omega,s,x,u(s,x),(\nabla u \sigma)(s,x))\big] ds \\
         & + \int_t^T g(\omega,s,x,u(s,x),(\nabla u \sigma)(s,x)) dB_{s}, \ \ 0\leq t\leq T,
 \end{split}
\end{equation}
where $u:\mathbb{R}_{+}\times \mathbb{R}^{d}\rightarrow  \mathbb{R}^{k}$ and $\mathcal{L}u=(Lu_{1},...,Lu_{k})^{\ast}$ with
\begin{equation*}
  L=\frac{1}{2}\sum_{i,j=1}^{d} (\sigma \sigma^{\ast})_{ij}(t,x)\frac{\partial^{2}}{\partial x_{i}\partial x_{j}}
    + \sum_{i=1}^{d} b_{i}(t,x) \frac{\partial}{\partial x_{i}}.
\end{equation*}

\begin{theorem}\label{13}
Let $f$ and $g$ satisfy the assumptions of Sects. 2 and 3, and $h$ be of class $C^{2}(\mathbb{R}^{d})$.
Let $\{u(t,x); 0\leq t\leq T, x\in\mathbb{R}^{d}\}$ be a random field such that $u(t,x)$ is $\mathcal{F}_{t,T}^{B}$-measurable and for each $(t,x)$,
$u(t,x)\in C^{0,2}([0,T]\times\mathbb{R}^{d})$ a.s., and $u$ satisfies SPDE (\ref3).
Then $u(t,x)=Y_{t}^{t,x}$, where $\{ (Y_{s}^{t,x},Z_{s}^{t,x}); 0\leq t\leq s\leq T, x\in\mathbb{R}^{d} \}$  is the unique solution of BDSDE (\ref{2}).
\end{theorem}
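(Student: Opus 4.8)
The plan is to verify directly that the random fields obtained by freezing $u$ along the forward diffusion solve the BDSDE, and then to conclude by the uniqueness already available to us. Fix $(t,x)$ and set
\begin{equation*}
  \widetilde{Y}_{s} := u(s,X^{t,x}_{s}), \qquad \widetilde{Z}_{s} := (\nabla u\,\sigma)(s,X^{t,x}_{s}), \qquad t\le s\le T.
\end{equation*}
Since $u(s,\cdot)$ is $\mathcal{F}^{B}_{s,T}$-measurable and $X^{t,x}_{s}$ is built from $W$, while $W$ and $B$ are independent, $(\widetilde{Y}_{s},\widetilde{Z}_{s})$ is $\mathcal{F}_{s}$-measurable; combined with the polynomial growth of $u,\nabla u$ (from the $C^{0,2}$ hypothesis) and the moment bounds on $X^{t,x}_{s}, \nabla X^{t,x}_{s}$ recorded in Section 3, this places $(\widetilde{Y},\widetilde{Z})$ in $S^{2}(0,T;\mathbb{R}^{k})\times M^{2}(0,T;\mathbb{R}^{k\times d})$. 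I would then show $(\widetilde{Y},\widetilde{Z})$ satisfies BDSDE (\ref{2}); by the uniqueness part of Proposition \ref{5} this forces $\widetilde{Y}_{s}=Y^{t,x}_{s}$ and $\widetilde{Z}_{s}=Z^{t,x}_{s}$, and evaluating at $s=t$ (where $X^{t,x}_{t}=x$) gives $u(t,x)=\widetilde{Y}_{t}=Y^{t,x}_{t}$, which is the claim.

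The analytic heart is a generalized (doubly stochastic) It\^o--Wentzell formula applied to the composition $s\mapsto u(s,X^{t,x}_{s})$. Here $u(\cdot,x)$ is a backward It\^o process in time, with dynamics read off from SPDE (\ref{3}),
\begin{equation*}
  -d_{s}u(s,x) = \big[\mathcal{L}u(s,x)+f(\omega,s,x,u,\nabla u\,\sigma)\big]\,ds + g(\omega,s,x,u,\nabla u\,\sigma)\,d^{\leftarrow}\! B_{s},
\end{equation*}
while $X^{t,x}_{s}$ is a forward It\^o diffusion driven by $W$. Composing the two produces the time drift of $u$, the transport and second-order terms $\nabla u\,b+\tfrac{1}{2}\mathrm{Tr}[\sigma\sigma^{\ast}D^{2}u]$ coming from the $dX$ and $d\langle X\rangle$ contributions, a forward martingale term $(\nabla u\,\sigma)(s,X_{s})\,dW_{s}$, and the backward term $g\,d^{\leftarrow}\!B_{s}$ carried along unchanged. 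Crucially, because $W\perp B$ the quadratic cross-variation of the $B$-martingale part of $u$ with the $W$-martingale part of $X$ vanishes, so no extra cross term appears. Since $\mathcal{L}u=\nabla u\,b+\tfrac{1}{2}\mathrm{Tr}[\sigma\sigma^{\ast}D^{2}u]$ by the definition of $L$, the operator term is \emph{exactly cancelled} by the first- and second-order terms generated by the diffusion, leaving
\begin{equation*}
  u(s,X_{s}) = u(T,X_{T}) + \int_{s}^{T} f(\omega,r,X_{r},\widetilde{Y}_{r},\widetilde{Z}_{r})\,dr + \int_{s}^{T} g(\omega,r,X_{r},\widetilde{Y}_{r},\widetilde{Z}_{r})\,dB_{r} - \int_{s}^{T} \widetilde{Z}_{r}\,dW_{r}.
\end{equation*}
Invoking the terminal condition $u(T,x)=h(x)$, i.e. $u(T,X^{t,x}_{T})=h(X^{t,x}_{T})$, this is precisely BDSDE (\ref{2}) for $(\widetilde{Y},\widetilde{Z})$.

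The step I expect to be the main obstacle is the rigorous justification of this doubly stochastic It\^o--Wentzell formula under the stated regularity. The field $u$ is only assumed continuous in $t$ and of class $C^{2}$ in $x$, and is merely a semimartingale -- not differentiable -- in the time variable, so Kunita's classical formula does not apply off the shelf. I would handle this by mollifying $u(s,\cdot)$ in the space variable to obtain fields smooth in $x$ to which the composition formula applies, then pass to the limit using the uniform moment estimates for $X^{t,x}_{s},\nabla X^{t,x}_{s},D^{2}X^{t,x}_{s}$ of Section 3 together with the boundedness of the derivatives of $\overline{f},\overline{g}$ and the polynomial growth of $u,\nabla u,D^{2}u$ inherited from the $C^{0,2}$ hypothesis. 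Particular care is needed with the backward It\^o integral $\int g\,d^{\leftarrow}\!B_{s}$: one must verify that mollification commutes with it in the limit and that the resulting object retains the correct backward-adaptedness, which is exactly where the independence of $B$ and $W$ and the measurability structure $\mathcal{F}_{t}=\mathcal{F}^{W}_{t}\vee\mathcal{F}^{B}_{t,T}$ are used. Once the formula is established, the cancellation of $\mathcal{L}u$ and the matching of the $f$, $g$ and $Z\,dW$ terms are immediate, and the consistency $\widetilde{Z}_{s}=(\nabla u\,\sigma)(s,X_{s})$ with the representation $Z^{t,x}_{s}=\nabla Y^{t,x}_{s}(\nabla X^{t,x}_{\theta})^{-1}\sigma(X^{t,x}_{\theta})$ of Proposition \ref{16} provides a useful internal check.
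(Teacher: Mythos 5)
Your proposal is correct in substance and has the same architecture as the paper's proof: set $\widetilde{Y}_{s}=u(s,X^{t,x}_{s})$, $\widetilde{Z}_{s}=(\nabla u\,\sigma)(s,X^{t,x}_{s})$, show this pair solves BDSDE (\ref{2}) via a composite It\^{o} computation in which $\mathcal{L}u$ cancels against the drift and quadratic-variation terms generated by the diffusion, and conclude by uniqueness (Proposition \ref{5}). Where you diverge is the justification of the ``doubly stochastic It\^{o}--Wentzell formula''. The paper does not mollify; it takes a partition $t=t_{0}<t_{1}<\cdots<t_{n}=T$ and splits each increment as
\begin{equation*}
u(t_{i},X_{t_{i}})-u(t_{i+1},X_{t_{i+1}})=\big[u(t_{i},X_{t_{i}})-u(t_{i},X_{t_{i+1}})\big]+\big[u(t_{i},X_{t_{i+1}})-u(t_{i+1},X_{t_{i+1}})\big],
\end{equation*}
handling the first bracket by the classical It\^{o} formula applied to the fixed $C^{2}$ field $u(t_{i},\cdot)$ along $X$ (only spatial regularity is needed), and the second bracket by the SPDE (\ref{3}) at the frozen spatial point $x=X_{t_{i+1}}$ (only the time dynamics are needed, with no differentiability in $t$); telescoping and letting the mesh go to zero yields exactly the composition formula you invoke. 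This discretization is better targeted than your mollification plan: smoothing $u(s,\cdot)$ in $x$ attacks spatial regularity, which is not the obstruction---$u$ is already $C^{2}$ in $x$, which is all the chain rule requires---whereas the genuine difficulty is the backward-in-time semimartingale structure of $u$ and the mixing of forward $dW$ and backward $dB$ integrals, which spatial mollification leaves untouched and the time-partition argument resolves directly. If you replace the mollification step by the partition/telescoping argument (verifying convergence of the Riemann-type sums for the $ds$, $dW$ and backward $dB$ terms using continuity of $u,\nabla u,D^{2}u$ and the moment bounds on $X$ from Section 3), your proof coincides with the paper's; your measurability and integrability remarks, the cancellation of $\mathcal{L}u$, and the consistency check against Proposition \ref{16} are all sound.
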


\begin{proof}
It suffices to show that $\big\{\big(u(s,X^{t,x}_{s}),(\nabla u \sigma)(s,X^{t,x}_{s})\big); 0\leq t\leq s\leq T, x\in\mathbb{R}^{d}\big\}$ solves BDSDE (\ref{2}).

Let $t=t_{0}< t_{1}< . . . <t_{n}=T$, then
\begin{equation*}
 \begin{split}
   & \sum_{i=0}^{n-1} \big[ u(t_{i},X^{t,x}_{t_{i}}) - u(t_{i+1},X^{t,x}_{t_{i+1}}) \big] \\
  =& \sum_{i=0}^{n-1} \big[ u(t_{i},X^{t,x}_{t_{i}}) - u(t_{i},X^{t,x}_{t_{i+1}}) \big]
    +\sum_{i=0}^{n-1} \big[ u(t_{i},X^{t,x}_{t_{i+1}}) - u(t_{i+1},X^{t,x}_{t_{i+1}}) \big]\\
  =& \sum_{i=0}^{n-1} \bigg[ -\int_{t_{i}}^{t_{i+1}} \mathcal{L}u(t_{i},X^{t,x}_{s}) ds-\int_{t_{i}}^{t_{i+1}} (\nabla u \sigma)(t_{i},X^{t,x}_{s}) dW_{s} \\
   & \ \ \ \ \ \ \ + \int_{t_{i}}^{t_{i+1}} \big[ \mathcal{L}u(s,X^{t,x}_{t_{i+1}})
    + f(\omega,s,X^{t,x}_{t_{i+1}},u(s,X^{t,x}_{t_{i+1}}),(\nabla u \sigma)(s,X^{t,x}_{t_{i+1}})) \big] ds\\
   & \ \ \ \ \ \ \ + \int_{t_{i}}^{t_{i+1}} g(\omega,s,X^{t,x}_{t_{i+1}},u(s,X^{t,x}_{t_{i+1}}),(\nabla u \sigma)(s,X^{t,x}_{t_{i+1}})) dB_{s}  \bigg],
 \end{split}
\end{equation*}
where we have used the It\^{o} formula and the equation satisfied by $u$.
It finally suffices to let the mesh size go to zero in order to conclude.
\end{proof}

We are now in a position to prove the converse to the above result:
\begin{theorem}\label{14}
Let $f, \ g$ and $h$ satisfy the assumptions of Sect. 2 and 3.
Then $\{ u(t,x):=Y_{t}^{t,x}, 0\leq t\leq T, x\in\mathbb{R}^{d} \}$ is the unique $C^{0,2}([0,T]\times\mathbb{R}^{d})$-solution of the quasilinear backward parabolic SPDE (\ref{3}).
\end{theorem}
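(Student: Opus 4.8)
The plan is to establish that $u(t,x) := Y_t^{t,x}$, built from the unique solution of BDSDE (\ref{2}), is indeed a $C^{0,2}$-solution of the SPDE (\ref{3}), and then invoke the uniqueness direction already furnished by Theorem \ref{13}. The regularity half is essentially free: by Theorem \ref{6} and Corollary \ref{18}, the random field $Y_t^{t,x}$ has a version that is continuous in $t$ and of class $C^2$ in $x$ (with derivatives jointly continuous in $(t,x)$), and by construction $u(t,x)$ is $\mathcal{F}_{t,T}^B$-measurable since the coefficients depend only on the backward-in-time Brownian increments through $\overleftarrow{B}_t$. Thus the candidate $u$ automatically lies in the solution class stipulated by Theorem \ref{13}.

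The substance of the argument is to verify that $u$ satisfies the SPDE. First I would record the flow property of the underlying diffusion: for $t \le r \le s$ one has $X_s^{t,x} = X_s^{r, X_r^{t,x}}$, and by uniqueness of the BDSDE (\ref{2}) together with the Markov/flow structure this yields $Y_s^{t,x} = Y_s^{r, X_r^{t,x}} = u(r, X_r^{t,x})$ along the trajectory, and correspondingly $Z_s^{t,x} = (\nabla u\, \sigma)(s, X_s^{t,x})$ via Proposition \ref{16}, which gives $Z_s^{t,x} = \nabla Y_s^{t,x}(\nabla X_s^{t,x})^{-1}\sigma(X_s^{t,x})$ and specializes to $Z_r^{r,x} = \nabla u(r,x)\,\sigma(x)$. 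The pair $(u(s,X_s^{t,x}), (\nabla u\,\sigma)(s,X_s^{t,x}))$ is therefore a natural candidate for the BDSDE solution. To produce the SPDE I would run the converse of the computation in Theorem \ref{13}: fix $x$, apply the It\^o formula for $u(t,X_t^{t,x})$ jointly in the time variable and the spatial argument $X$, separating the contribution $\mathcal{L}u\,ds + (\nabla u\,\sigma)\,dW_s$ coming from the forward diffusion from the $ds$- and $dB_s$-increments carried by the random field $u$ itself. Matching these increments against the dynamics (\ref{2}) of $(Y^{t,x},Z^{t,x})$ forces $u$ to satisfy exactly (\ref{3}); this is the reverse reading of the telescoping-sum identity displayed in the proof of Theorem \ref{13}, where one now lets the mesh tend to zero starting from the BDSDE side rather than the SPDE side.

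The main obstacle is making the It\^o formula rigorous in this doubly-stochastic, random-coefficient setting. Because $u$ is only $C^{0,2}$ (merely continuous, not differentiable, in $t$), one cannot differentiate $u$ in time directly; instead the $ds$ and $dB_s$ integrals must be extracted from the SPDE increment $u(t_i, \cdot) - u(t_{i+1}, \cdot)$, and one must justify an It\^o--Wentzell-type expansion that simultaneously handles the forward $dW$ integral and the backward $dB$ integral — these being the two It\^o--Skorohod integrals noted after (\ref{4}). The regularity estimates from Theorem \ref{6} and the moment bounds of Proposition \ref{8}, together with the key Malliavin identity (\ref{11}) $D_\theta Y_s = \nabla Y_s (\nabla X_\theta)^{-1}\sigma(X_\theta)$ and the relation $D_sY_s = Z_s$ from Proposition \ref{10}, are precisely what control the cross terms and guarantee that the mesh-refinement limit exists in $L^2$. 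Once the expansion is justified and the increments are identified, uniqueness in the class $C^{0,2}$ follows immediately from Theorem \ref{13}, completing the proof.
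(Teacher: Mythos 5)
Your proposal follows essentially the same route as the paper's proof: the flow identity $Y^{t,x}_{t+h}=Y^{t+h,X^{t,x}_{t+h}}_{t+h}$, the decomposition of $u(t+h,x)-u(t,x)$ into a spatial increment handled by It\^{o}'s formula with the time argument frozen and a temporal increment supplied by the BDSDE (\ref{2}), then telescoping over a partition and letting the mesh go to zero using $Z^{t,x}_{s}=(\nabla u\,\sigma)(s,X^{t,x}_{s})$ from Proposition \ref{16} and Theorem \ref{6}, with uniqueness delegated to Theorem \ref{13}. One phrasing slip to fix: the $ds$- and $dB_s$-increments cannot be ``extracted from the SPDE increment'' (that would be circular, since the SPDE is what is being proved); in this direction they come from the BDSDE along the flow, exactly as your surrounding sentences --- and the paper --- describe, and freezing the time argument at the right endpoint of each subinterval is what lets the paper avoid any genuine It\^{o}--Wentzell formula.
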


\begin{proof}
From Theorem \ref{6},  $u\in C^{0,2}([0,T]\times\mathbb{R}^{d})$.
Let $h>0$ be such that $t+h\leq T$.
We first note that $Y^{t,x}_{t+h}=Y^{t+h,X^{t,x}_{t+h}}_{t+h}$.
Hence
\begin{equation*}
 \begin{split}
   & u(t+h,x)- u(t,x)\\
  =&  [u(t+h,x)- u(t+h,X^{t,x}_{t+h})] + [ u(t+h,X^{t,x}_{t+h}) - u(t,x) ] \\
%\ \ \ \ \ \ \ \ \ \ \ \ \ \ \ \ \ \ \ \ \ \ \
% \end{split}
%\end{equation*}
%\begin{equation*}
% \begin{split}
  =& -\int_{t}^{t+h} \mathcal{L}u(t+h,X^{t,x}_{s}) ds - \int_{t}^{t+h} (\nabla u \sigma)(t+h,X^{t,x}_{s}) dW_{s} +\int_{t}^{t+h} Z^{t,x}_{s} dW_{s}\\
   & -\int_{t}^{t+h} f(\omega,s,X^{t,x}_{s},Y^{t,x}_{s},Z^{t,x}_{s}) ds -\int_{t}^{t+h} g(\omega,s,X^{t,x}_{s},Y^{t,x}_{s},Z^{t,x}_{s}) dB_{s}, \\
 \end{split}
\end{equation*}
where we have used the It\^{o} formula and the BDSDE.
Now let $t=t_{0}< t_{1}< . . . <t_{n}=T$. We have
\begin{equation*}
 \begin{split}
   h(x)- u(t,x)
  =& \sum_{i=0}^{n-1} \bigg[ -\int_{t_{i}}^{t_{i}+1} \big[\mathcal{L}u(t+h,X^{t,x}_{s}) + f(\omega,s,X^{t,x}_{s},Y^{t,x}_{s},Z^{t,x}_{s})\big] ds \\
   & \ \ \ \ \ \ \ +\int_{t_{i}}^{t_{i}+1} \big[Z^{t,x}_{s} - (\nabla u \sigma)(t+h,X^{t,x}_{s})\big] dW_{s}\\
   & \ \ \ \ \ \ \ -\int_{t_{i}}^{t_{i}+1} g(\omega,s,X^{t,x}_{s},Y^{t,x}_{s},Z^{t,x}_{s}) dB_{s} \bigg]. \\
 \end{split}
\end{equation*}
It follows from Theorem \ref{6} and Proposition \ref{16} that if we let the mesh size go to zero, we obtain in the limit:
\begin{equation*}
 \begin{split}
  u(t,x) =& h(x) + \int_t^T \big[\mathcal{L}u(s,x) + f(\omega,s,x,u(s,x),(\nabla u \sigma)(s,x))\big] ds \\
          &+ \int_t^T g(\omega,s,x,u(s,x),(\nabla u \sigma)(s,x)) dB_{s}.
 \end{split}
\end{equation*}
Hence $u\in C^{1,2}([0,T]\times\mathbb{R}^{d})$ and satisfies SPDE (\ref{3}).
\end{proof}

\begin{remark}
Our result extends the nonlinear stochastic Feynman-Kac formula of Pardoux-Peng \cite{Peng}
(and the linear stochastic Feynman-Kac formula of Pardoux \cite{Pardoux})
to non-Markovian case.
Indeed, if $f$ and $g$ are determined, i.e., independent of $\omega$,
then our stochastic Feynman-Kac formula degenerates to the result of Pardoux-Peng \cite{Peng}.
\end{remark}

\begin{remark}
Our result also expands the nonlinear Feynman-Kac formula of Pardoux-Peng \cite{Peng92} to the random coefficient case.
In fact, if the coefficient $g\equiv 0$, then BDSDE (\ref{2}) degenerates to a BSDE,
and our stochastic Feynman-Kac formula extends the result of Pardoux-Peng \cite{Peng} to non-Markovian case.
In this case, a similar (but different) result was obtained in Ma et al. \cite{Zhang}.
\end{remark}

\begin{remark}
Let $\varphi\in M^{2}(0,T; \mathbb{R}^{l})$ such that $\varphi_{t}$ takes value in $\mathbb{D}^{1,2}$ for all $t\in[0,T]$. Denote
\begin{equation*}
  \psi (\omega,t):=E\big[\varphi(\omega,t)|\mathcal{F}^{B}_{t,T}\big].
\end{equation*}
Then, in (\ref{12}), if the coefficients $f$ and $g$ are of the form
\begin{equation*}\label{}
 \begin{split}
   f(\omega,t,x,y,z)&= \overline{f}(\psi(\omega,t),x,y,z),\\
   g(\omega,t,x,y,z)&= \overline{g}(\psi(\omega,t),x,y,z),\\
 \end{split}
\end{equation*}
Theorem \ref{13} and Theorem \ref{14} can also be established by the similar way.
\end{remark}

%%\section*{Acknowledgments}

\bibliographystyle{elsarticle-num}

\end{document}